\newtheorem{theorem}{Theorem}[section]
\newtheorem{proposition}[theorem]{Proposition}
\newtheorem{example}[theorem]{Example}
\newtheorem{claim}[theorem]{Claim}
\newcommand{\MyLines}[1]{%
	\begin{tikzpicture}[scale=1.5]
		\foreach \n in {1,...,#1}
		{\pgfmathsetmacro\Division{360/#1}
			\pgfmathsetmacro\Divisions{(360*\n)/#1}
			\draw[fill=white] (90-\Divisions:3cm) coordinate (v\n) circle[radius=.5em] node {\n};}
		\begin{pgfonlayer}{nice}
			\foreach \m/\n in {1/1,1/2,1/3,1/4,1/5,1/6,1/7,1/8,1/9,1/10,1/11,1/12,1/13,1/14,1/15,1/16,1/17,1/18,1/19,1/20,1/21,1/22,1/23,1/24,1/25,1/26,1/27,1/28,1/29,1/30,1/31,1/32,1/33,1/34,1/35,1/36,2/1,2/3,2/5,2/7,2/9,2/11,2/13,2/15,2/17,2/19,2/21,2/23,2/25,2/27,2/29,2/31,2/33,2/35,3/1,3/2,3/4,3/5,3/7,3/8,3/10,3/11,3/13,3/14,3/16,3/17,3/19,3/20,3/22,3/23,3/25,3/26,3/28,3/29,3/31,3/32,3/34,3/35,4/1,4/3,4/5,4/7,4/9,4/11,4/13,4/15,4/17,4/19,4/21,4/23,4/25,4/27,4/29,4/31,4/33,4/35,5/1,5/2,5/3,5/4,5/6,5/7,5/8,5/9,5/11,5/12,5/13,5/14,5/16,5/17,5/18,5/19,5/21,5/22,5/23,5/24,5/26,5/27,5/28,5/29,5/31,5/32,5/33,5/34,5/36,6/1,6/5,6/7,6/11,6/13,6/17,6/19,6/23,6/25,6/29,6/31,6/35,7/1,7/2,7/3,7/4,7/5,7/6,7/8,7/9,7/10,7/11,7/12,7/13,7/15,7/16,7/17,7/18,7/19,7/20,7/22,7/23,7/24,7/25,7/26,7/27,7/29,7/30,7/31,7/32,7/33,7/34,7/36,8/1,8/3,8/5,8/7,8/9,8/11,8/13,8/15,8/17,8/19,8/21,8/23,8/25,8/27,8/29,8/31,8/33,8/35,9/1,9/2,9/4,9/5,9/7,9/8,9/10,9/11,9/13,9/14,9/16,9/17,9/19,9/20,9/22,9/23,9/25,9/26,9/28,9/29,9/31,9/32,9/34,9/35,10/1,10/3,10/7,10/9,10/11,10/13,10/17,10/19,10/21,10/23,10/27,10/29,10/31,10/33,11/1,11/2,11/3,11/4,11/5,11/6,11/7,11/8,11/9,11/10,11/12,11/13,11/14,11/15,11/16,11/17,11/18,11/19,11/20,11/21,11/23,11/24,11/25,11/26,11/27,11/28,11/29,11/30,11/31,11/32,11/34,11/35,11/36,12/1,12/5,12/7,12/11,12/13,12/17,12/19,12/23,12/25,12/29,12/31,12/35,13/1,13/2,13/3,13/4,13/5,13/6,13/7,13/8,13/9,13/10,13/11,13/12,13/14,13/15,13/16,13/17,13/18,13/19,13/20,13/21,13/22,13/23,13/24,13/25,13/27,13/28,13/29,13/30,13/31,13/32,13/33,13/34,13/35,13/36,14/1,14/3,14/5,14/9,14/11,14/13,14/15,14/17,14/19,14/23,14/25,14/27,14/29,14/31,14/33,15/1,15/2,15/4,15/7,15/8,15/11,15/13,15/14,15/16,15/17,15/19,15/22,15/23,15/26,15/28,15/29,15/31,15/32,15/34,16/1,16/3,16/5,16/7,16/9,16/11,16/13,16/15,16/17,16/19,16/21,16/23,16/25,16/27,16/29,16/31,16/33,16/35,17/1,17/2,17/3,17/4,17/5,17/6,17/7,17/8,17/9,17/10,17/11,17/12,17/13,17/14,17/15,17/16,17/18,17/19,17/20,17/21,17/22,17/23,17/24,17/25,17/26,17/27,17/28,17/29,17/30,17/31,17/32,17/33,17/35,17/36,18/1,18/5,18/7,18/11,18/13,18/17,18/19,18/23,18/25,18/29,18/31,18/35,19/1,19/2,19/3,19/4,19/5,19/6,19/7,19/8,19/9,19/10,19/11,19/12,19/13,19/14,19/15,19/16,19/17,19/18,19/20,19/21,19/22,19/23,19/24,19/25,19/26,19/27,19/28,19/29,19/30,19/31,19/32,19/33,19/34,19/35,19/36,20/1,20/3,20/7,20/9,20/11,20/13,20/17,20/19,20/21,20/23,20/27,20/29,20/31,20/33,21/1,21/2,21/4,21/5,21/8,21/10,21/11,21/13,21/16,21/17,21/19,21/20,21/22,21/23,21/25,21/26,21/29,21/31,21/32,21/34,22/1,22/3,22/5,22/7,22/9,22/13,22/15,22/17,22/19,22/21,22/23,22/25,22/27,22/29,22/31,22/35,23/1,23/2,23/3,23/4,23/5,23/6,23/7,23/8,23/9,23/10,23/11,23/12,23/13,23/14,23/15,23/16,23/17,23/18,23/19,23/20,23/21,23/22,23/24,23/25,23/26,23/27,23/28,23/29,23/30,23/31,23/32,23/33,23/34,23/35,23/36,24/1,24/5,24/7,24/11,24/13,24/17,24/19,24/23,24/25,24/29,24/31,24/35,25/1,25/2,25/3,25/4,25/6,25/7,25/8,25/9,25/11,25/12,25/13,25/14,25/16,25/17,25/18,25/19,25/21,25/22,25/23,25/24,25/26,25/27,25/28,25/29,25/31,25/32,25/33,25/34,25/36,26/1,26/3,26/5,26/7,26/9,26/11,26/15,26/17,26/19,26/21,26/23,26/25,26/27,26/29,26/31,26/33,26/35,27/1,27/2,27/4,27/5,27/7,27/8,27/10,27/11,27/13,27/14,27/16,27/17,27/19,27/20,27/22,27/23,27/25,27/26,27/28,27/29,27/31,27/32,27/34,27/35,28/1,28/3,28/5,28/9,28/11,28/13,28/15,28/17,28/19,28/23,28/25,28/27,28/29,28/31,28/33,29/1,29/2,29/3,29/4,29/5,29/6,29/7,29/8,29/9,29/10,29/11,29/12,29/13,29/14,29/15,29/16,29/17,29/18,29/19,29/20,29/21,29/22,29/23,29/24,29/25,29/26,29/27,29/28,29/30,29/31,29/32,29/33,29/34,29/35,29/36,30/1,30/7,30/11,30/13,30/17,30/19,30/23,30/29,30/31,31/1,31/2,31/3,31/4,31/5,31/6,31/7,31/8,31/9,31/10,31/11,31/12,31/13,31/14,31/15,31/16,31/17,31/18,31/19,31/20,31/21,31/22,31/23,31/24,31/25,31/26,31/27,31/28,31/29,31/30,31/32,31/33,31/34,31/35,31/36,32/1,32/3,32/5,32/7,32/9,32/11,32/13,32/15,32/17,32/19,32/21,32/23,32/25,32/27,32/29,32/31,32/33,32/35,33/1,33/2,33/4,33/5,33/7,33/8,33/10,33/13,33/14,33/16,33/17,33/19,33/20,33/23,33/25,33/26,33/28,33/29,33/31,33/32,33/34,33/35,34/1,34/3,34/5,34/7,34/9,34/11,34/13,34/15,34/19,34/21,34/23,34/25,34/27,34/29,34/31,34/33,34/35,35/1,35/2,35/3,35/4,35/6,35/8,35/9,35/11,35/12,35/13,35/16,35/17,35/18,35/19,35/22,35/23,35/24,35/26,35/27,35/29,35/31,35/32,35/33,35/34,35/36,36/1,36/5,36/7,36/11,36/13,36/17,36/19,36/23,36/25,36/29,36/31,36/35}
			\draw (v\n)--(v\m);
		\end{pgfonlayer}
	\end{tikzpicture}
}
\newcommand{\myLines}[1]{%
	\begin{tikzpicture}[scale=1.5]
		\foreach \n in {2,3,4,5,6,8,9,10,12,14,15,16,18,20,21,22,24,25,26,27,28,30,32,33,34,35,36}
		{\pgfmathsetmacro\Division{360/#1}
			\pgfmathsetmacro\Divisions{(360*\n)/#1}
			\draw[fill=white] (90-\Divisions:3cm) coordinate (v\n) circle[radius=.5em] node {\n};}
		\begin{pgfonlayer}{nice}
			\foreach \m/\n in {2/3,2/5,2/9,2/15,2/21,2/25,2/27,2/33,2/35,
				3/2,3/4,3/5,3/8,3/10,3/14,3/16,3/20,3/22,3/25,3/26,3/28,3/32,3/34,3/35,
				4/3,4/5,4/9,4/15,4/21,4/25,4/27,4/33,4/35,5/2,5/3,5/4,5/6,5/8,5/9,5/12,5/14,5/16,5/18,5/21,5/22,5/24,5/26,5/27,5/28,5/32,5/33,5/34,5/36,6/5,6/25,6/35,8/3,8/5,8/9,8/15,8/21,8/25,8/27,8/33,8/35,9/2,9/4,9/5,9/8,9/10,9/14,9/16,9/20,9/22,9/25,9/26,9/28,9/32,9/34,9/35,10/3,10/9,10/21,10/27,10/33,12/5,12/25,12/35,14/3,14/5,14/9,14/15,14/25,14/27,14/33,15/2,15/4,15/8,15/14,15/16,15/22,15/26,15/28,15/32,15/34,16/3,16/5,16/9,16/15,16/21,16/25,16/27,16/33,16/35,18/5,18/25,18/35,20/3,20/9,20/21,20/27,20/33,21/2,21/4,21/5,21/8,21/10,21/16,21/20,21/22,21/25,21/26,21/32,21/34,22/3,22/5,22/9,22/15,22/21,22/25,22/27,22/35,24/5,24/25,24/35,25/2,25/3,25/4,25/6,25/8,25/9,25/12,25/14,25/16,25/18,25/21,25/22,25/24,25/26,25/27,25/28,25/32,25/33,25/34,25/36,26/3,26/5,26/9,26/15,26/21,26/25,26/27,26/33,26/35,27/2,27/4,27/5,27/8,27/10,27/14,27/16,27/20,27/22,27/25,27/26,27/28,27/32,27/34,27/35,28/3,28/5,28/9,28/15,28/25,28/27,28/33,32/3,32/5,32/9,32/15,32/21,32/25,32/27,32/33,32/35,33/2,33/4,33/5,33/8,33/10,33/14,33/16,33/20,33/25,33/26,33/28,33/32,33/34,33/35,34/3,34/5,34/9,34/15,34/21,34/25,34/27,34/33,34/35,35/2,35/3,35/4,35/6,35/8,35/9,35/12,35/16,35/18,35/22,35/24,35/26,35/27,35/32,35/33,35/34,35/36,36/5,36/25,36/35}
			\draw (v\n)--(v\m);
		\end{pgfonlayer}
	\end{tikzpicture}
}
\title{On Structural Properties and Adjacency Spectrum of Coprime Graph of Integers}
\author{Subarsha Banerjee
	\\	Department of  Mathematics,  JIS University 
	\\
	81 Nilgunj Road, Agarpara, West Bengal 700109\\
	e-mail:subarshabnrj@gmail.com/subarsha.banerjee@jisuniversity.ac.in
}
\date{}
\begin{document}
	\maketitle
	
	\begin{abstract}
		Let $TCG_n$ denote the coprime graph having vertex set $\{1,2,\ldots,n\}$ with any two vertices $i,j$ being adjacent if and only if $\gcd(i,j)=1$.
		In this article, we first study some structural properties of $TCG_n$.
		We study the vertex connectivity and crossing number of the coprime graph of integers. We discover a lower constraint on the multiplicity of $-1$, which appears as an eigenvalue in the adjacency matrix of $TCG_n$.
		We demonstrate our findings with a variety of cases.  We also show that the adjacency matrix of $TCG_n$ is singular, i.e. has determinant $0$. Furthermore, we give a lower bound on the multiplicity of   $0$, which appears as an eigenvalue in the adjacency matrix of $TCG_n$.
		Finally, we establish that the greatest eigenvalue of the adjacency matrix of $TCG_n$ is always above $2.$
	\end{abstract}
	\noindent
	\textbf{Keywords:} coprime graph; vertex connectivity; crossing number;
	adjacency spectrum.
	\\
	\textbf{2020 Mathematics Subject Classification:} 05C25, 05C50.
	
	\section{Introduction}
	
	Let $TCG_n$ be the coprime graph having vertex set $\{1,2,\ldots,n\}$, where any two vertices  $i,j$ are adjacent if and only if $\gcd(i,j)=1$.
	Coprime graphs, presented by Erdös \cite{erdos1961remarks} in 1962, have sparked attention and research into various critical subjects. Erdös published a conjecture concerning subgraphs in coprime graphs in \cite{erdos1961remarks}, which sparked academic interest. Ahlswede and Khachatrian's 1995 paper \cite{ahlswede1994extremal} responded negatively to the supposition. Ahlswede and Khachatrian conducted more research on this topic \cite{ahlswede1995maximal,ahlswede1996sets}. Newman suggested a conjecture about the criteria for perfect matching in coprime graphs, later proven by Pomerance and Selfridge in \cite{pomerance1980proof}. There are also other studies. Ahlswede and Blinovsky \cite{ahlswede2006maximal} examined extremal sets without coprime elements and sets of integers with pairwise common divisors. Erdös and Sarkozy \cite{erdHos1997cycles} researched the cycle of coprime graphs, while Sander and Sander \cite{sander2009kernel} investigated the kernel of coprime graphs. A survey \cite{rao2011creative} provides additional information on the research status of coprime graphs.
	In \cite{pan2019full}, the authors determined the automorphism group, as well as the determining and resolving sets of coprime graphs.
	The concept of coprime graphs was generalized in \cite{sriram2014} and further studied in \cite{sriramgeneralised}. 
	The coprime graph of finite groups has also been an active research area. Many researchers have studied the coprime graphs of various finite groups recently, for example, \cite{dorbidi2016note,selvakumar2017classification,hamm2021parameters,banerjee2021laplacian,madhumitha2024graphs,ma2014coprime}.
	Inspired by the previously described studies, we investigate several structural properties of the coprime graph of integers.
	Studying spectral properties of graphs associated with various algebraic structures has been an active topic of research, see \cite{rehman2024exploring,rehman2024randic,rather2023normalized,shen2023laplacian,alsaluli2024laplacian,banerjee2022spectra,banerjee2022laplacian,banerjee2023distance,banerjee2023structural}.
	Consequently, we also shed light on the adjacency eigenvalues of the coprime graph of integers.
	\Cref{PL} is devoted to the basic concepts required for the sections that follow. We explore various structural characteristics of \( TCG_n \) in \Cref{S1}. Lastly, we discuss the adjacency spectrum of \( TCG_n \) in \Cref{S2}. An appendix is included in \Cref{appendix}, listing the adjacency spectra of \( TCG_n \) for \( 3 \leq n \leq 15 \), computed using \textsc{Matlab}.

	\section{Preliminaries}
	\label{PL}
	In this section, we present some preliminary theorems and definitions that we have utilized throughout the article.
	
	A graph $G$ is represented by $G=(V,E)$ where $V$ denotes the vertex set of $G$ and $E$ denotes the edge set of $G$. 
	Two vertices are considered to be \textit{adjacent} if there is an edge between them.
	A \textit{complete graph} with $n$ vertices, denoted by $K_n$ is a graph in which each pair of distinct vertices are adjacent.
	A \textit{path} $P$ of length $k$ in a graph $G$ is an alternating sequence of vertices and edges $v_0, e_0, v_1, e_1, v_2, e_2, \ldots, v_{k-1}, e_{k-1}, v_k$, where $v_i's$ are unique vertices, and $e_i$ is the edge joining $v_i$ and $v_{i+1}$. 
	If $v_0=v_k$, $P$ is considered a \textit{cycle} with length $k$.
	The \textit{girth} refers to the length of the shortest cycle in $G$.
	A graph $G$ is considered \textit{connected} if there is a path connecting any two vertices $u,v\in V$.
	The term \textit{disconnected} graph refers to a graph that is not connected.
	In a connected graph $G$, the \textit{distance} between two vertices $u,v$, indicated by $d(u,v)$, is the length of the shortest path between $u$ and $v$.
	The diameter of a connected graph $G$ is defined as diam$(G)=\max\{ d(u,v):u,v\in V\}$.
	A \textit{planar} graph can be immersed in the plane, meaning its edges intersect only at their endpoints.
	The \textit{crossing number} of a graph $G$ denoted by $\text{cr}(G)$ is the minimal number of edge crossings in a planar drawing of $G$.
	If the graph is planar, then its crossing number is $0$.
	The crossing number of the complete graph $K_n$ and complete bipartite graph $K_{m,n}$ are still not known for all $m,n$. 
	Several problems on crossing number of a graph have been discussed in \cite{erdos1973crossing}. Finding the crossing number of a graph is an NP-complete problem \cite{garey1983crossing}.
	Readers may refer to \cite{schaefer2012graph} for a survey on the crossing number of a graph.
	The \textit{vertex connectivity} $\kappa(G)$ of a graph $G$ is the least number of vertices that may be removed to create a disconnected graph. We define a disconnected graph's connectivity as $0$.
	A \textit{triangulated} graph has an edge connecting two non-adjacent vertices in each cycle of length four or more.
	A graph's \textit{clique number} refers to the size of its greatest complete subgraph.
	A graph is said to be \textit{bipartite} if its vertex set can be partitioned into two disjoint sets $P$ and $Q$ such that any edge in the graph will have one of its end-points in $A$, and the other in $B$.
	A \textit{complete bipartite graph} denoted by $K_{m,n}$ is a special kind of bipartite graph in which there exists an edge between every vertex of $P$ and every vertex of $Q$.
	The \textit{adjacency matrix} of a graph having vertices $v_1, v_2, \dots, v_n$ is an $n\times n$ matrix whose $(i,j)^{\text{th}}$ entry is $1$ if there is an edge between $v_i$ and $v_j$ and it is $0$ otherwise.
	The adjacency matrix of $G$ is denoted by $A(G)$, and it is a symmetric matrix. Consequently, the eigenvalues of $A(G)$ are real numbers. Let the eigenvalues of $A(G)$ be denoted by $\lambda_1\le \lambda_2\le \lambda_3\le \cdots\le \lambda_{n-1}\le \lambda_n$.
	The largest eigenvalue of $A(G)$, i.e. $\lambda_n$ is known as the \textit{spectral radius} of $G$.
	The remaining results in the paper depend on the following theorems.
	The remaining results in the paper depend on the following theorems.
	
	\begin{theorem}[Kuratowski's Theorem {\cite[Theorem~6.2.2]{west2001introduction}}] \label{Kura}
		A graph is planar if and only if it does not contain a subdivision of \( K_5 \) or \( K_{3,3} \).
	\end{theorem}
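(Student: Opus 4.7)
The plan is to prove Kuratowski's theorem in two directions, treating the easy sufficiency direction first and then the harder necessity direction. The easy direction asserts that any graph containing a subdivision of $K_5$ or $K_{3,3}$ is non-planar. First I would establish that $K_5$ and $K_{3,3}$ themselves are non-planar via Euler's formula: for $K_5$, combining $V-E+F=2$ with the face-edge inequality $3F\le 2E$ forces $5-10+\lfloor 20/3\rfloor < 2$, a contradiction; for $K_{3,3}$, bipartiteness upgrades the inequality to $4F\le 2E$, yielding the same type of contradiction. Then I would observe that planarity is preserved under subdivision (contracting degree-two vertices back), so any graph containing a subdivision of a non-planar graph is itself non-planar.

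For the hard direction, I would argue by contradiction: take $G$ to be a minimum-edge counterexample, i.e., a non-planar graph containing no subdivision of $K_5$ or $K_{3,3}$. The first step is to show that $G$ is $3$-connected. If $G$ has a cut-vertex or $2$-vertex cut $\{x,y\}$, one decomposes $G$ into smaller pieces, observes that each piece (augmented with the edge $xy$ where appropriate) is planar by minimality, and glues the planar embeddings along $xy$ to contradict non-planarity. Along the way one must verify that adding the virtual edge $xy$ does not create a Kuratowski subdivision in the pieces, which uses the absence of such a subdivision in $G$.

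Once $G$ is known to be $3$-connected, I would pick an edge $e=uv$ and consider the contraction $G/e$. Since $G/e$ has fewer edges and (with some checking) inherits the property of containing no Kuratowski subdivision, minimality of $G$ forces $G/e$ to be planar. Fix a planar embedding of $G/e$ and let $w$ denote the contracted vertex. A standard fact about $3$-connected planar graphs (Whitney's theorem on unique embeddings, or the non-separating induced cycle characterization of faces) says the neighbors of $w$ lie on a cycle $C$ bounding a face of $G/e-w$. Splitting $w$ back into $u$ and $v$ distributes the neighbors of $u$ and those of $v$ around $C$. If the two neighbor sets can be separated into two arcs of $C$, then the embedding extends to a planar embedding of $G$, contradicting non-planarity. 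Otherwise, the neighbors interleave, and a careful case analysis produces either a subdivision of $K_5$ (when $u$ and $v$ each have three common neighbors arranged appropriately) or a subdivision of $K_{3,3}$ (when the interleaving pattern yields the bipartite structure).

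The main obstacle, as in all standard proofs, will be the final case analysis: rigorously enumerating the ways the neighbors of $u$ and $v$ may be distributed around the facial cycle $C$ and exhibiting an explicit $K_5$ or $K_{3,3}$ subdivision in every non-separable configuration. The combinatorial bookkeeping here is delicate because one must exploit $3$-connectivity to ensure the existence of internally disjoint paths between prescribed vertex pairs, and one must verify that the resulting branch vertices and subdivided edges are genuinely disjoint. This is where the heart of the argument lies, and it is the reason I would simply cite \Cref{Kura} from \cite{west2001introduction} rather than reproduce the full proof in the paper.
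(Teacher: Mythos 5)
The paper does not prove this statement at all: Kuratowski's theorem is quoted as a classical preliminary and attributed to \cite[Theorem 6.2.2]{west2001introduction}, so there is no in-paper argument to compare against. Your outline is the standard proof of the theorem and is essentially the route taken in the cited reference: non-planarity of $K_5$ and $K_{3,3}$ via Euler's formula for the easy direction, and, for the converse, a minimum-edge counterexample shown to be $3$-connected, contraction of an edge, the facial-cycle description of the neighbourhood of the contracted vertex in the (planar, by minimality) contracted graph, and a case analysis that extracts a $K_5$- or $K_{3,3}$-subdivision whenever the neighbours of the two endpoints interleave on that cycle. The steps you list are all sound, with two small remarks: the Euler-formula arithmetic is more cleanly phrased as the failure of $E\le 3V-6$ for $K_5$ ($10\not\le 9$) and of $E\le 2V-4$ for $K_{3,3}$ ($9\not\le 8$); and, as you acknowledge, the concluding case analysis is only described rather than carried out, so the proposal is an outline rather than a complete proof. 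Since the paper itself merely cites the result, deferring to \cite{west2001introduction} as you do at the end is the appropriate resolution, and nothing in your sketch would mislead a reader about how that citation is justified.
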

	
	\begin{theorem}[Eigenvalue Interlacing Theorem {\cite{brouwer2011spectra}}] \label{ELT}
		Suppose \( A \in \mathbb{R}^{n \times n} \) is a symmetric matrix, and let \( B \in \mathbb{R}^{m \times m} \) with \( m < n \) be a principal submatrix of \( A \). Suppose the eigenvalues of \( A \) are 
		\[
		\lambda_1 \le \lambda_2 \le \cdots \le \lambda_n,
		\]
		and the eigenvalues of \( B \) are
		\[
		\beta_1 \le \beta_2 \le \cdots \le \beta_m.
		\]
		Then, for each \( k = 1, 2, \ldots, m \), we have
		\[
		\lambda_k \le \beta_k \le \lambda_{k + n - m}.
		\]
		Moreover, if \( m = n - 1 \), then the interlacing becomes
		\[
		\lambda_1 \le \beta_1 \le \lambda_2 \le \beta_2 \le \cdots \le \beta_{n-1} \le \lambda_n.
		\]
	\end{theorem}

	\begin{theorem}[Bertrand's Postulate {\cite{ireland2013classical}}] \label{BT}
		For every natural number \( n \), there always exists at least one prime number between \( n \) and \( 2n \).
	\end{theorem}
	
	\begin{theorem}[Characterization of Bipartite Graphs {\cite[Theorem~1.2.18]{west2001introduction}}] \label{CY}
		A graph is bipartite if and only if it does not contain any odd cycle.
	\end{theorem}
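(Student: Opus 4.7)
The plan is to prove both implications of the biconditional separately, using the structural definition of a bipartite graph for one direction and building an explicit bipartition via distances for the other.

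For the forward direction, I would assume $G$ is bipartite with bipartition $(P,Q)$ of its vertex set and consider an arbitrary cycle $v_0 e_0 v_1 e_1 \cdots v_{k-1} e_{k-1} v_0$. Since every edge of $G$ has its endpoints in distinct parts, the indices of vertices lying in $P$ and those in $Q$ strictly alternate along the cycle. Starting with $v_0 \in P$ (without loss of generality), an elementary parity argument forces $v_i \in P$ exactly when $i$ is even, and the closing edge $e_{k-1}$ returning to $v_0 \in P$ therefore requires $k$ to be even. Hence every cycle has even length.

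For the backward direction, I would first reduce to the case that $G$ is connected, since a graph is bipartite if and only if each of its components is. Fixing a root $v_0$, define
\[
P = \{v \in V(G) : d(v_0,v) \text{ is even}\}, \quad Q = \{v \in V(G) : d(v_0,v) \text{ is odd}\},
\]
which partitions $V(G)$. To check bipartiteness it suffices to show no edge has both endpoints in the same part. Suppose for contradiction that some edge $uv$ satisfies $u,v \in P$ (the case $u,v \in Q$ being symmetric), and let $\pi_u$ and $\pi_v$ be shortest $v_0$--$u$ and $v_0$--$v$ paths, of lengths $2a$ and $2b$ respectively. Concatenating $\pi_u$, the edge $uv$, and the reverse of $\pi_v$ yields a closed walk of odd length $2a+2b+1$.

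The main obstacle, and the only nontrivial step, is the standard lemma that every closed walk of odd length in a graph contains an odd cycle. I would prove it by induction on the walk length: a closed walk of odd length that is itself a cycle already gives the conclusion; otherwise some internal vertex is repeated, and splitting the walk at that repetition produces two shorter closed walks whose lengths sum to the original odd length, forcing at least one to be of odd length, to which the inductive hypothesis applies. Applying this lemma to the odd closed walk constructed above contradicts the hypothesis that $G$ has no odd cycle, completing the proof.
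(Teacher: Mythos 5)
Your proof is correct and is the standard argument: parity along a cycle for the forward direction, the distance-parity bipartition plus the ``odd closed walk contains an odd cycle'' lemma for the converse. The paper does not prove this statement at all --- it is quoted as Theorem 1.2.18 of West's \emph{Introduction to Graph Theory} --- and your argument is essentially the proof given in that cited source, so there is nothing to reconcile.
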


	\section{Structural Properties of $TCG_n$}
	\label{S1}
	\begin{proposition}
		If $n>3$, then	$TCG_n$ is connected and has diameter $2$.
	\end{proposition}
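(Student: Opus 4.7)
The plan rests on a single easy observation: the vertex $1$ is a universal vertex, since $\gcd(1,k)=1$ for every $k\in\{2,\ldots,n\}$. I would open the proof by recording this fact explicitly.

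From universality of $1$, connectedness is immediate: for any two distinct vertices $u,v\in\{2,\ldots,n\}$ the walk $u\sim 1\sim v$ is a path in $TCG_n$, and of course any vertex is joined directly to $1$. This simultaneously shows that the distance between any two vertices is at most $2$, so $\operatorname{diam}(TCG_n)\le 2$.

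To get equality I need to exhibit a single pair of non-adjacent vertices, which forces the diameter to be at least $2$. Assuming $n\ge 4$ (the only interesting range, since for $n\le 3$ every pair of distinct vertices is coprime and the graph is complete), the vertices $2$ and $4$ satisfy $\gcd(2,4)=2>1$, so they are not adjacent, giving $d(2,4)=2$. Combining this with the upper bound yields $\operatorname{diam}(TCG_n)=2$.

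There is essentially no obstacle here; the whole argument is a one-line consequence of the universality of the vertex $1$, together with the observation that a non-coprime pair exists as soon as $n\ge 4$. The only minor care I would take is to flag the small-$n$ boundary (so that the statement about the diameter is not vacuously misread for $n\le 3$, where the graph is actually complete and the diameter is $1$).
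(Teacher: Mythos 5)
Your proof is correct and follows essentially the same route as the paper: the vertex $1$ is universal, so every vertex is within distance $2$ of every other. In fact you are slightly more careful than the paper, which only establishes $\operatorname{diam}(TCG_n)\le 2$ and then asserts equality; your exhibition of the non-adjacent pair $2,4$ for $n\ge 4$ (and your remark that the graph is complete, hence of diameter $1$, for $n\le 3$) supplies the lower bound that the paper's argument leaves implicit.
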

	\begin{proof}
		Consider the vertex $1\in TCG_n$.
		For any $v(\neq 1)\in TCG_n$, $\gcd(v,1)=1$ which implies that $v$ is adjacent to $1$.
		Hence \textit{distance} between any two vertices of $TCG_n$ is at most $2$.
		Thus \textit{diameter} of $TCG_n$ equals $2$.
	\end{proof}

	\begin{proposition}
		$TCG_n$ is complete if and only if $n=2 \text{ or } 3$.
	\end{proposition}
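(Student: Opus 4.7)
The plan is to handle the two directions of the biconditional separately, both by direct inspection. For the sufficiency, when $n=3$ the vertex set is $\{1,2,3\}$ and the three possible edges correspond to the pairs $\{1,2\}$, $\{1,3\}$, $\{2,3\}$, each of which consists of coprime integers; hence every pair of distinct vertices is adjacent and $TCG_3$ coincides with $K_3$.

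For necessity, I would argue the contrapositive: for every $n \geq 4$ the graph $TCG_n$ contains a pair of non-adjacent vertices and therefore fails to be complete. The simplest witness is the pair $2$ and $4$, both of which lie in the vertex set $\{1,2,\ldots,n\}$ as soon as $n \geq 4$, yet $\gcd(2,4) = 2 \neq 1$, so the two vertices are non-adjacent in $TCG_n$. Combining the two directions yields the stated equivalence.

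The argument is essentially immediate and I do not anticipate any substantive obstacle; the only conceptual content is exhibiting a non-coprime pair of small integers that is guaranteed to be present once $n$ is large enough. More generally, any two distinct even numbers in the vertex set would serve as a non-adjacent pair, so the choice of $\{2,4\}$ is merely the most economical one.
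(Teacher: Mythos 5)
Your proof is correct and follows essentially the same route as the paper: verify completeness directly for $n=3$ and exhibit the non-adjacent pair $\{2,4\}$ for all $n\ge 4$. (Both your argument and the paper's implicitly assume $n\ge 3$, since $TCG_1$ and $TCG_2$ are also vacuously complete, but this does not affect the substance.)
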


	\begin{proof}
		If $n=2$,  then the vertices of $TCG_n$ are $\{1,2\}$ which is complete. If $n=3$, then the vertices of $TCG_n$ are $\{1,2,3\}$, which can be readily seen to be complete.
		Conversely, if $n\ge 4$ then the vertices $2$ and $4$ of $TCG_n$ are not adjacent since $\gcd(2,4)=2(\neq 1)$.
		Hence $TCG_n$ is not complete for $n\ge 4$.
		\\
		Thus the result follows.
	\end{proof}
	
	\begin{proposition}
		If $n\ge 3$, then girth of $TCG_n$ is $3$ .
	\end{proposition}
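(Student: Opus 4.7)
The plan is to exhibit an explicit triangle in $TCG_n$ whenever $n\ge 3$; since any cycle in a simple graph has length at least $3$, producing a single $3$-cycle forces the girth to equal exactly $3$.

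Concretely, I would take the three smallest vertices $1,2,3\in V(TCG_n)$, which are all present by the assumption $n\ge 3$. A direct computation gives $\gcd(1,2)=\gcd(1,3)=\gcd(2,3)=1$, so by the defining adjacency rule each pair among $\{1,2,3\}$ is joined by an edge. Hence these three vertices induce a copy of $K_3$ in $TCG_n$, showing that $\mathrm{girth}(TCG_n)\le 3$.

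For the reverse inequality I would simply note that $TCG_n$ is a simple graph (no loops or multiple edges), so its shortest cycle must have length at least $3$. Combining the two bounds yields $\mathrm{girth}(TCG_n)=3$. There is no substantive obstacle here; the only thing to be careful about is the edge case $n=3$, which is already covered by the same witness $\{1,2,3\}$.
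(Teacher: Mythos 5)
Your argument is correct and matches the paper's proof: both exhibit the triangle on $\{1,2,3\}$ and conclude the girth is $3$ since no shorter cycle exists in a simple graph. Your write-up is in fact slightly more explicit than the paper's in justifying the lower bound $\mathrm{girth}\ge 3$, but the approach is identical.
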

	
	\begin{proof}
		Consider the vertices $\{1,2,3\}$ of $TCG_n$.
		Since the given vertices form a $3$ cycle and it is the shortest cycle $TCG_n$ possesses for any $n\ge 3$, the girth of  $TCG_n$ equals $3$. 
	\end{proof}
	
	\begin{proposition}
		If $n\ge 3$, then $TCG_n$ is not bipartite.
	\end{proposition}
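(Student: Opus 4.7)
The plan is to invoke \Cref{CY}, which characterizes bipartite graphs as precisely those graphs containing no odd cycle. So it suffices to exhibit a single odd cycle in $TCG_n$ for every $n \ge 3$.

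The natural candidate is the same triangle used in the preceding proposition on girth: the vertices $1, 2, 3$. Since $\gcd(1,2) = \gcd(1,3) = \gcd(2,3) = 1$, these three vertices are pairwise adjacent in $TCG_n$ and therefore form a $3$-cycle, which is odd. Applying \Cref{CY} then immediately yields that $TCG_n$ is not bipartite.

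There is essentially no obstacle here; the step is a one-line consequence of the previous proposition combined with the characterization of bipartite graphs via odd cycles. The only thing worth stating carefully is why the three vertices $1, 2, 3$ actually belong to $TCG_n$, which holds as soon as $n \ge 3$, matching the hypothesis.
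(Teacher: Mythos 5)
Your proof is correct and follows exactly the same approach as the paper: exhibit the triangle on $\{1,2,3\}$ as an odd cycle and apply \Cref{CY}. You merely spell out the $\gcd$ verifications that the paper leaves implicit.
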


	\begin{proof}
		If $n\ge 3$, then $TCG_n$ contains an odd cycle of length $3$.
		Using \Cref{CY}, we conclude that $TCG_n$ is not bipartite.
	\end{proof}

	\begin{theorem}
		If $n\ge 3$ then $TCG_n$ is triangulated.
	\end{theorem}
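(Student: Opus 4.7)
The plan is to exploit the fact that the vertex $1$ is a universal vertex of $TCG_n$: since $\gcd(1,m)=1$ for every integer $m$, the vertex $1$ is adjacent to every other vertex of $TCG_n$. This single observation is essentially the only nontrivial structural ingredient needed, so the proof should be organized around it.

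Let $C \colon v_1 v_2 \cdots v_k v_1$ be an arbitrary cycle in $TCG_n$ with $k \geq 4$; I want to produce an edge of $TCG_n$ joining two vertices of $C$ that are non-adjacent in $C$. I would split on whether $1$ belongs to $V(C)$. If $1 \in V(C)$, relabel so that $v_1 = 1$. Since $k \geq 4$, the vertex $v_3$ is not consecutive to $v_1$ in $C$, yet $v_1 v_3$ is an edge of $TCG_n$ by universality. This edge is precisely the required chord, and we are done in this case without any further arithmetic.

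The main obstacle is the case $1 \notin V(C)$, where universality of $1$ does not immediately yield a chord inside $C$. Here I would attempt to locate two non-consecutive indices $i,j$ with $\gcd(v_i,v_j)=1$ by examining the prime factorisations of the $v_i$, exploiting that every pair of cyclically consecutive $v_i$ is already known to be coprime. If the direct combinatorial argument is uncooperative (for instance, because configurations such as $2\text{--}3\text{--}4\text{--}9$ demand some care), the fallback is to read "triangulated" in the sense that every edge lies in a triangle: for any edge $uv$ with $u,v \ne 1$, the triple $\{1,u,v\}$ is a triangle, while for an edge $1v$ the existence of a $w \in \{2,\ldots,n\}\setminus\{v\}$ coprime to $v$ (guaranteed by taking a prime not dividing $v$, using $n \ge 3$) yields the triangle $\{1,v,w\}$. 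Either way, the crux of the argument is the universality of vertex $1$, with the delicate step being the handling of cycles that avoid $1$.
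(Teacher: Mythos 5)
There is a genuine gap in your primary plan, and it is not fixable: under the paper's stated definition of \emph{triangulated} (every cycle of length four or more has a chord, i.e.\ chordality), the theorem is actually false for $n\ge 9$. The configuration you flag as ``demanding some care'' is in fact a counterexample: $2$--$3$--$4$--$9$--$2$ is a $4$-cycle in $TCG_9$ (consecutive pairs are coprime), and neither diagonal is an edge since $\gcd(2,4)=2$ and $\gcd(3,9)=3$. So your Case~2 ($1\notin V(C)$) cannot be completed by any argument, and the search for a chord via prime factorisations is doomed. Your Case~1 is correct but only covers cycles through the universal vertex $1$.

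What rescues the situation is precisely your fallback. The paper's own proof silently abandons the chordal definition and instead shows only that every vertex $m$ with $2\le m\le n$ lies in the triangle $\{1,m-1,m\}$ (using $\gcd(m-1,m)=1$ and the universality of $1$); that is, it proves ``every vertex is in a triangle,'' a much weaker property than chordality. Your fallback --- every edge $uv$ with $u,v\ne 1$ lies in the triangle $\{1,u,v\}$, and every edge $1v$ lies in a triangle by picking a prime not dividing $v$ --- is the same idea and in fact slightly stronger than what the paper establishes. So the correct move is to drop the chordality argument entirely, state explicitly which notion of ``triangulated'' is being proved, and run your fallback (or the paper's $\{1,m-1,m\}$ version) as the actual proof.
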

	
	\begin{proof}
		To show that $TCG_n$ is triangulated, we show that any vertex of $TCG_n$ is a vertex of a triangle.
		We notice that for any $m$ where $2\le m\le n$, $\gcd(m-1,m)=1$.
		Thus the vertex $m$ of $TCG_n$ is a vertex of the triangle $\Delta$ where $\Delta=\{1,m-1,m\}$. 
		Since the above holds for any $m$ where $2\le m\le n$, we conclude that $TCG_n$ is triangulated.
	\end{proof}

	\begin{theorem}
		\label{clique}
		The clique number of $TCG_n$  is $\pi(n)+1$, where $\pi(n)$ is the prime counting function.
	\end{theorem}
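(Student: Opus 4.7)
The plan is to establish the equality by two matching bounds. For the lower bound, I would exhibit an explicit clique of size $\pi(n)+1$. Take the set $C=\{1\}\cup\{p : p \text{ is prime and } p\le n\}$. The vertex $1$ is coprime to every other integer, and any two distinct primes are trivially coprime, so every pair of vertices in $C$ is joined by an edge in $TCG_n$. Hence $\omega(TCG_n)\ge |C|=\pi(n)+1$.

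For the upper bound, let $K$ be any clique in $TCG_n$; I need to show $|K|\le \pi(n)+1$. Since at most one element of $\{1,2,\ldots,n\}$ equals $1$, write $K=\{1\}\cap K \,\cup\, K'$, where $K'=K\setminus\{1\}$ consists of clique vertices that are at least $2$. For each $v\in K'$, let $p(v)$ denote the smallest prime divisor of $v$; since $2\le v\le n$, we have $p(v)\le v\le n$, so $p(v)$ is a prime counted by $\pi(n)$.

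The key step is to observe that the map $v\mapsto p(v)$ from $K'$ to the set of primes at most $n$ is injective. Indeed, if $u,v\in K'$ with $u\ne v$ shared the same smallest prime divisor $p$, then $p$ would divide $\gcd(u,v)$, forcing $\gcd(u,v)\ge p>1$, contradicting the fact that $u$ and $v$ are adjacent in $TCG_n$. Therefore $|K'|\le \pi(n)$, and including the possible vertex $1$ gives $|K|\le \pi(n)+1$.

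Combining both bounds yields $\omega(TCG_n)=\pi(n)+1$, as claimed. There is no real obstacle here; the only subtlety worth emphasising is that the smallest-prime-divisor map is well defined precisely because every clique element other than $1$ is at least $2$, and its injectivity is exactly the coprimality condition for adjacency.
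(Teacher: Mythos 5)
Your proof is correct, and it takes a genuinely different --- and in fact logically stronger --- route than the paper's. The paper exhibits the same clique $\mathcal{S}=\{1\}\cup\{p\le n: p \text{ prime}\}$ for the lower bound, but for the upper bound it only verifies that $\mathcal{S}$ is a \emph{maximal} clique: it shows that adjoining any single composite $w\notin\mathcal{S}$ destroys completeness, because $w$ fails to be adjacent to one of its own prime factors already sitting in $\mathcal{S}$. Maximality alone does not rule out the existence of a larger clique elsewhere in the graph, so the paper's argument, as written, does not actually establish $\omega(TCG_n)\le\pi(n)+1$. Your argument closes exactly this gap: by mapping each clique vertex $v\ge 2$ to its smallest prime divisor and observing that pairwise coprimality forces this map to be injective into the set of primes at most $n$, you bound the size of \emph{every} clique by $\pi(n)+1$, not just the size of extensions of $\mathcal{S}$. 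The only cosmetic issue is the expression $K=\{1\}\cap K\cup K'$, which should be parenthesized as $(\{1\}\cap K)\cup K'$; the substance is fine.
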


	\begin{proof}
		Given $n\ge 3$, consider the set $$\mathcal{S}=\{1\le x\le n: x \text{ is a prime} \}\cup \{1\}.$$
		Clearly for any $i,j\in \mathcal{S}$,  $\gcd(i,j)=1$ which makes us  conclude that  the \textit{induced} subgraph of $\mathcal{S}$ in $TCG_n$ is complete.
		\\
		We now prove that the induced subgraph of $\mathcal{S}$ in $TCG_n$ is \textit{maximal}.
		We take an element $w$ such that $w\notin \mathcal{S}$.
		We now consider  the \textit{induced} subgraph of $\mathcal{S}\cup \{w\}$ in $TCG_n$.
		Since $w\notin \mathcal{S}$, so $w$ is composite and hence it must have a prime factor say $p$.
		Now since $\gcd(p,w)=p(\neq 1)$, we find that $p$ and $w$ are not adjacent and hence the  \textit{induced} subgraph of $\mathcal{S}\cup \{w\}$ in $TCG_n$ is not complete.
		Thus the \textit{induced} subgraph of $\mathcal{S}$ in $TCG_n$ forms a \textit{maximal clique}.
		
		We find that the set $\mathcal{S}$ contains $\pi(n)+1$ elements, where  $\pi(n)$  denotes the number of \textit{primes} which are less than or equal to $n$.
		Since the number of elements in $\mathcal{S}$ is $\pi(n)+1$,  we conclude that $\omega(TCG_n)=\pi(n)+1$.

	\end{proof}
	
	\begin{theorem}
		If $n\ge 7$, then $TCG_n$ is not planar.
	\end{theorem}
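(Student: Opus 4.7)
The plan is to invoke Kuratowski's Theorem (\Cref{Kura}) together with the clique-number computation from \Cref{clique}. Since a graph containing $K_5$ as a subgraph trivially contains a subdivision of $K_5$, it suffices to exhibit $K_5$ inside $TCG_n$ for every $n\ge 7$.

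First I would observe that for $n=7$ the primes not exceeding $n$ are $2,3,5,7$, so $\pi(7)=4$. By \Cref{clique}, the clique number of $TCG_7$ is $\pi(7)+1=5$; indeed the set $\{1,2,3,5,7\}$ is pairwise coprime and hence induces a $K_5$ in $TCG_7$.

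Next I would note that $TCG_n$ contains $TCG_7$ as an induced subgraph for every $n\ge 7$, since restricting the vertex set to $\{1,2,\ldots,7\}$ preserves the coprimality adjacency relation. Consequently $\{1,2,3,5,7\}$ still induces a $K_5$ in $TCG_n$ for each $n\ge 7$, so $TCG_n$ contains a subdivision of $K_5$. By \Cref{Kura}, $TCG_n$ is not planar.

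I do not anticipate a genuine obstacle here: the entire argument is a direct application of \Cref{clique} and Kuratowski's Theorem. The only minor point to double-check is the monotonicity statement that $TCG_m$ sits inside $TCG_n$ whenever $m\le n$, which is immediate from the definition of the coprime graph on $\{1,\ldots,n\}$.
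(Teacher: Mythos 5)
Your proposal is correct and follows essentially the same route as the paper: both exhibit a $K_5$ in $TCG_7$ via the clique number $\pi(7)+1=5$ from \Cref{clique}, then extend to all $n\ge 7$ by the containment $TCG_7\subseteq TCG_n$ and conclude with \Cref{Kura}. Your explicit identification of the clique $\{1,2,3,5,7\}$ is a harmless (and slightly more concrete) addition.
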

	
	\begin{proof}
		If $n=7$, then using \Cref{clique} we find that  $TCG_n$ has a \textit{clique} of order $\pi(7)+1=5$.
		Thus for $n=7$, $TCG_n$ contains the complete graph $K_5$ as a subgraph.
		Using \Cref{Kura} we observe that $TCG_7$ is not planar.
		Since $TCG_n$ is a subgraph of $TCG_{n+1}$ and $TCG_n$ is not planar for $n=7$, we conclude that $TCG_n$ is not planar for $n\ge 7$.
	\end{proof}

	\begin{theorem}
		The crossing number of $TCG_n$ is at least $3$, where equality holds if and only if $n=7$.
	\end{theorem}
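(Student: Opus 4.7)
My plan is to split the result into three parts: an explicit drawing of $TCG_7$ achieving $3$ crossings, a matching lower bound $\text{cr}(TCG_7)\geq 3$, and a strict-inequality argument for $n\geq 8$. Monotonicity $TCG_n\supseteq TCG_{n-1}$ then propagates $\text{cr}(TCG_n)\geq 3$ to all $n\geq 7$. For the upper bound at $n=7$, I would place the universal vertex $1$ at the centre of a circle and arrange the six remaining vertices on that circle; the six star-edges from $1$ are then free of crossings, and a judicious cyclic ordering of $2,3,4,5,6,7$ along the circle allows the remaining eleven coprime edges to be routed with exactly three crossings, giving $\text{cr}(TCG_7)\leq 3$.

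For the lower bound $\text{cr}(TCG_7)\geq 3$, I would use the structural observation (following the clique construction in \Cref{clique}) that the induced subgraph of $TCG_7$ on $\{1,2,3,4,5,7\}$ equals $K_6$ with only the edge $\{2,4\}$ removed. The Euler-formula bound $\text{cr}(G)\geq |E|-3|V|+6 = 14-12 = 2$, combined with an explicit drawing, yields $\text{cr}(K_6-e)=2$. The remaining vertex $6$ is adjacent only to $\{1,5,7\}$ in $TCG_7$. The argument is then finished by a face-rotation case analysis: one enumerates the optimal $2$-crossing drawings of $K_6-e$ up to combinatorial equivalence (the high symmetry of $K_6-e$ keeps the number of cases small) and verifies that in each such drawing the three vertices $1,5,7$ do not all lie on the boundary of a common face, so inserting vertex $6$ together with its three incident edges must create at least one additional crossing.

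For the strict inequality when $n\geq 8$, I would treat two ranges. When $n\geq 9$, a direct count gives $|E(TCG_9)|=27$ and $|V(TCG_9)|=9$, so the Euler bound yields $\text{cr}(TCG_9)\geq 27-27+6 = 6 > 3$, and monotonicity handles $n>9$. The case $n=8$ is the tightest, since the Euler bound only delivers $\text{cr}(TCG_8)\geq 21-24+6=3$. Here one must extend the face-rotation argument of the previous paragraph: $TCG_8$ contains $TCG_7$ together with the new vertex $8$ of degree $4$ adjacent to $\{1,3,5,7\}$, and one shows that no $3$-crossing drawing of $TCG_7$ admits a face incident to all four of these vertices into which $8$ can be inserted without a further crossing. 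The principal obstacle throughout is precisely this face-rotation case analysis: classifying the optimal drawings of $K_6-e$ and of $TCG_7$ and ruling out the existence of a face carrying the attachment set of the newly added vertex. Edge counting and subgraph monotonicity dispatch every other regime routinely.
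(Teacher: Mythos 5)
Your overall architecture is sound and in places more careful than the paper's own treatment. The paper obtains the upper bound from the same kind of explicit drawing, rules out $\text{cr}(TCG_7)\le 1$ by the edge count $e\le 3n-6$ (essentially the Euler counting you apply to $K_6-e$), but excludes $\text{cr}(TCG_7)=2$ differently: it locates two copies of $K_5$ (on $\{1,2,3,5,7\}$ and $\{1,3,4,5,7\}$) and a $K_{3,3}$ (on $\{2,4,6\}\cup\{1,5,7\}$) and argues that their forced crossings are pairwise distinct; it then disposes of $n>7$ by bare subgraph monotonicity. Your observation that monotonicity alone only yields $\text{cr}(TCG_n)\ge 3$ for $n>7$, so that $n=8$ genuinely needs a separate argument (the Euler bound giving only $21-24+6=3$ there, while it already gives $6$ at $n=9$), is a real improvement in rigor over the paper's closing sentence.

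Nevertheless, as it stands the proposal has a genuine gap, and you name it yourself: the two ``face-rotation case analyses'' are not carried out, and they are the entire content of the lower bound. The reduction is logically valid --- since $\text{cr}(K_6-e)=2$, a $2$-crossing drawing of $TCG_7$ would restrict to an optimal drawing of $K_6-e$ in which the three edges at vertex $6$ are crossing-free, forcing a face incident to $1$, $5$ and $7$ --- but the claim that no optimal drawing of $K_6-e$ has such a face must be verified over \emph{all} good drawings with exactly two crossings, not merely over a few symmetric representatives, and no enumeration (nor a bound on the number of inequivalent cases) is supplied. The $n=8$ step is heavier still: it presupposes a classification of all $3$-crossing drawings of $TCG_7$, a larger graph with less symmetry, together with a verification that none has a face incident to all of $1,3,5,7$; neither is given. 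Until these enumerations are actually performed, or replaced by something checkable (for instance a crossing-counting argument, or citing the known crossing number of the complete multipartite subgraph $K_{1,1,1,1,3}\subseteq TCG_8$ on parts $\{1\},\{3\},\{5\},\{7\},\{2,4,8\}$), what you have is a plausible programme rather than a proof.
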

	
	\begin{proof}
		If $n=7$, then $TCG_n$ is of the following form:
		\begin{figure}[H]
			\centering
			\begin{tikzpicture}
				\foreach \p [evaluate =\p as \angle using 360*\p/7+50] in {1,...,7}
				\node[shape=circle,draw=black] (\p) at (\angle:3) {$\p$};  
				\foreach \x [remember= \x as \lastx (initially 7)]in{1,...,7} \draw (\lastx)--(\x);  
				
				\draw(4)--(7)--(2)--(5)--(7)--(3)--(5);
				
				\draw    (1) to[out=170,in=140,looseness=1.2] (3);
				\draw    (1) to[out=160,in=180,looseness=2] (4);
				\draw    (1) to[in=30,out=30,looseness=1.7] (5);
				\draw    (1) to[in=80,out=20,looseness=1.3] (6);       
			\end{tikzpicture} 
			\caption{$TCG_7$}
			\label{Fig1}
			
		\end{figure}
		
		Using \Cref{Fig1}, we observe that $TCG_7$ has $3$ crossings. 
		This implies that the crossing number of $TCG_7$ is at most $3$.
		Now, we claim that the crossing number of $TCG_7$ cannot be $1$. We establish our claim below.
		\begin{claim}
			\label{crossing}
			$\text{cr}(TCG_7)\neq 1$.
		\end{claim}
		\begin{proof}
			Assume that $\text{cr}(TCG_7)=1$.
			Now $TCG_7$ has $7$ vertices and $17$ edges. Since $\text{cr}(TCG_7)=1$, if we remove $1$ edge from $TCG_7$, the resultant graph containing  $7$ vertices and $16$ edges will be planar.
			Now we know that a planar graph must satisfy the following conditions:
			\begin{equation}
				\label{planar}
				e\le 3n-6, \text{where } e \text { is the number of edges and } n \text{ is the number of vertices}.
			\end{equation}
			In this case, $n=7$ and $e=16$.
			Using \Cref{planar}, $16\le (3\times 7)-6=15$ which is false.
			Thus, we arrive at a contradiction.
			Hence, our assumption that $\text{cr}(TCG_7)=1$ is false.
			This proves our claim.
		\end{proof}
		
		Now we show that $TCG_7$ can't have a crossing number $2$ as well. Let $G_1$ be the subgraph of $TCG_7$ induced by the set $\{1,2,3,5,7\}$ and let $G_2$ be the subgraph of $TCG_7$ induced by the set $\{1,3,4,5,7\}$.
		Both the graphs $G_1$ and $G_2$ are complete and hence are isomorphic to $K_5$.
		Now, we know that the crossing number of $K_5$ is $1$.
		We observe that $TCG_7$ contains subgraphs isomorphic to $K_5$ but none of them has the vertices $2$ and $4$ at the same time.
		This shows that $TCG_7$ has $2$ different crossings. Moreover, let $G_3$ the subgraph of $TCG_7$ induced by the set $\{2,6,4,1,5,7\}$. Clearly $G_3$ is isomorphic to $K_{3,3}$. Now, it is known that the crossing number of $K_{3,3}$ is $1$. Moreover, the subgraph $G_3$ also has a pair of crossing edges, and each edge of it has exactly one even vertex. Therefore, a crossing of these edges is different from the two crossings found in $G_1$ and $G_2$.
		Thus $TCG_7$ can't have a crossing number $2$. 
		Using the above arguments and  Claim \ref{crossing}, it is proved that the crossing number of $TCG_7$ is $3$. 
		Now, for any $n>7$, $TCG_n$ contains $TCG_7$ as a subgraph. Since the crossing number of $TCG_7$ is $3$, it is established that the crossing number of $TCG_n$ is greater than $3$ for all $n>7.$
		
	\end{proof}
	
	\begin{theorem}
		
		Let $n=p_1^{\alpha_1}p_2^{\alpha_2}\cdots p_m^{\alpha_m}$ where $p_i$ is  a prime and $\alpha_i$ is a positive integer for $i=1,2, \ldots, m$. Then the vertex connectivity of $TCG_n$ is given by
		$$\kappa(TCG_n)\le \text{Number of elements in } \{1\le x\le n:\gcd(x,p_k\#)=1\}$$ where $p_k\#$ is the $k^{th}$ primorial such that $p_k\#\le n$.
		
	\end{theorem}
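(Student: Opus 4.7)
The plan is to produce the bound by a direct vertex-cut argument: I would exhibit a single vertex of $TCG_n$ whose open neighborhood has cardinality exactly equal to the right-hand side, and then invoke the textbook inequality $\kappa(G) \le \delta(G) \le \deg(v)$ valid for every graph $G$ and every vertex $v$.

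The vertex I would pick is the primorial itself, namely $v := p_k\# = p_1 p_2 \cdots p_k$. By the hypothesis $p_k\# \le n$, this $v$ is a legitimate vertex of $TCG_n$. By the definition of adjacency in $TCG_n$, a vertex $u \in \{1,\ldots,n\}$ is a neighbor of $v$ if and only if $u \ne v$ and $\gcd(u, v) = 1$. Now $\gcd(p_k\#, p_k\#) = p_k\# \ge 2$, so $p_k\#$ is automatically excluded from the set $S := \{1 \le x \le n : \gcd(x, p_k\#) = 1\}$; hence the open neighborhood of $v$ coincides with $S$ on the nose. This gives $\deg(v) = |S|$, and so $\kappa(TCG_n) \le \delta(TCG_n) \le \deg(v) = |S|$, which is the claimed bound.

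An equivalent, more hands-on phrasing that I would include for intuition: deleting the vertices in $S$ from $TCG_n$ strips $p_k\#$ of all its neighbors and leaves it isolated. As long as at least one other vertex survives (automatic whenever $n > p_k\#$, since there are then composite multiples of some $p_i$ still present), the resulting graph is disconnected, yielding $\kappa(TCG_n) \le |S|$ directly from the definition of vertex connectivity.

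The main obstacle: essentially none of substance — the theorem is really just the clever identification of the vertex $p_k\#$, after which the count of its neighborhood is exactly the quantity appearing on the right. The role of the factorization $n = p_1^{\alpha_1} p_2^{\alpha_2} \cdots p_m^{\alpha_m}$ in the statement is only to fix the labelling of the primes $p_1 < p_2 < \cdots$; the proof itself never uses that the $p_i$ divide $n$, only that $p_k\#$ is at most $n$ so as to belong to the vertex set. The tiny edge cases (for instance $n \in \{2,3\}$, where $TCG_n$ is complete) can be dispatched by direct inspection and are consistent with the bound, since $\kappa(G) \le \delta(G)$ holds with equality for complete graphs.
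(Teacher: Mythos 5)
Your proposal is correct and follows essentially the same route as the paper: both identify the primorial $p_k\#$ as the key vertex and observe that the set $\{1\le x\le n:\gcd(x,p_k\#)=1\}$ is precisely its neighborhood, whose removal isolates it. Your phrasing via $\kappa(G)\le\delta(G)\le\deg(v)$ is a slightly cleaner packaging that also quietly handles the degenerate cases the paper's separating-set wording glosses over, but the underlying idea is identical.
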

	
	\begin{proof}
		
		Consider the largest positive integer $k$ such that $p_k\#\le n$.
		Here $p_k\# $ is defined as the product of the first $k$ primes, i.e. $p_k\#=2\times 3 \times 5\times  \dots \times p_k$ where $p_k$ is the $k^{th}$ prime.
		\\
		Let us denote $m=p_k\#$ such that $m$ is the largest \textit{primorial} not exceeding $n$. 
		We consider the vertex $m$ of $TCG_n$.
		We notice that if we remove those vertices $x$ of $TCG_n$ for which  $\gcd(m,x)=1$, then the vertex $m$ gets \textit{isolated} and $TCG_n$ becomes disconnected.
		Thus the set $\{1\le x\le n:\gcd(x,m)=1\}$  is a \textit{separating} set of $TCG_n$ and hence 
		
		\begin{align*}
			\begin{split}
				\kappa(TCG_n)\le \text{Number of elements in } \{1\le x\le n:\gcd(x,m)=1\}
				\\
				= \text{Number of elements in } \{1\le x\le n:\gcd(x,p_k\#)=1\}
			\end{split}
		\end{align*}

		where $p_k\#$ is the $k^{th}$ primorial such that $p_k\#\le n$.

	\end{proof}

	\begin{example}
		Consider $TCG_n$ for $n=36=2^2\times 3^2.$
		We consider the \textit{primorial} $p_k\#$ where $k$ is the largest integer such that $p_k\#\le 36$.
		Here $k=3$ and $p_3\#=2\times 3\times 5=30$.
		Now $TCG_{36}$ has the following form:
		If we remove the vertices which are coprime to $30$, i.e. if we remove the vertices $\{1,7,11,13,17,19,23,29,31\}$ from $TCG_{36}$ then the vertex $30$ becomes isolated which in turn makes $TCG_{36}$ disconnected.
		Thus $\{1,7,11,13,17,19,23,29,31\}$ is a separating set of $TCG_{36}$ and  $\kappa(TCG_{36})\leq 9$.
		
		\begin{figure}[H]
			\centering
			
			\MyLines{36}
		\end{figure}

		\begin{figure}[H]
			\centering
			\myLines{36}
		\end{figure}
		
	\end{example}
	
	\section{Adjacency Spectrum of Coprime Graph of Integers}
	\label{S2}
	In this section, we shall study the adjacency spectrum of the coprime graph of integers. We find a lower bound on the multiplicity of $-1$ which occurs as an eigenvalue of the adjacency matrix of $TCG_n$.
	We illustrate our results using various examples.  We further prove that the adjacency matrix of $TCG_n$ is singular, i.e. it has determinant $0$. Moreover, we also provide a lower bound on the multiplicity of the eigenvalue $0$ which appears as an eigenvalue of the adjacency matrix of $TCG_n$.
	In the end, we prove that the largest eigenvalue of the adjacency matrix of $TCG_n$ is always greater than $2.$

	\begin{theorem}
		\label{number}
		The number of times $-1$ appears as an eigenvalue of the adjacency matrix of $TCG_n$ is at least equal to the cardinality of $\mathcal{D}$, where $\mathcal{D}$ is the set of prime numbers which are greater than $\frac{n}{2}$ but less than $n$. 
	\end{theorem}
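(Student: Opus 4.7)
The plan is to exhibit an explicit family of $|\mathcal{D}|$ linearly independent eigenvectors of $A(TCG_n)$ with eigenvalue $-1$, using the observation that several vertices of $TCG_n$ have identical closed neighborhoods.

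First, I would observe that for any prime $p$ with $n/2 < p < n$ one has $2p > n$, so $p$ is the only multiple of $p$ lying in $\{1,\dots,n\}$. Consequently $\gcd(p,x)=1$ for every $x\in\{1,\dots,n\}\setminus\{p\}$, which means $p$ is adjacent to every other vertex of $TCG_n$; equivalently, its closed neighborhood satisfies $N[p]=\{1,\dots,n\}$. The vertex $1$ trivially enjoys the same property. Hence all vertices in $\{1\}\cup\mathcal{D}$ share one and the same closed neighborhood, and in particular are pairwise adjacent.

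Next I would invoke the standard \emph{true twin} principle: whenever $u\neq v$ are vertices with $N[u]=N[v]$, the vector $e_u-e_v$ is an eigenvector of $A(TCG_n)$ with eigenvalue $-1$. This is a three-line verification checking the $u$-th, $v$-th and remaining entries of $A(e_u-e_v)$, using that $u$ and $v$ are adjacent (since $u\in N[u]=N[v]$ forces $A_{u,v}=1$) and that $A_{w,u}=A_{w,v}$ for every $w\notin\{u,v\}$ by equality of the closed neighborhoods.

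Finally, writing $\mathcal{D}=\{p_1,\ldots,p_k\}$ with $k=|\mathcal{D}|$, I would take the $k$ vectors $e_1-e_{p_j}$ for $1\le j\le k$. By the previous two steps each is a $-1$-eigenvector, and their linear independence is immediate: restricting to the coordinates $p_1,\dots,p_k$, the vector $e_1-e_{p_j}$ becomes $-e_{p_j}$, and these $k$ restrictions are manifestly independent. This produces $|\mathcal{D}|$ linearly independent $-1$-eigenvectors and proves the claimed lower bound on multiplicity.

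I do not anticipate a serious obstacle. The only subtlety worth flagging is that taking pairwise differences among the $|\mathcal{D}|$ primes alone yields only $|\mathcal{D}|-1$ independent eigenvectors; pulling the vertex $1$ into the family of twins is exactly what closes the gap to $|\mathcal{D}|$.
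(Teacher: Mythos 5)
Your proposal is correct and is essentially the paper's argument in eigenvector form: the paper observes that the rows of $A+I$ indexed by $\{1\}\cup\mathcal{D}$ are identical (because each such vertex is adjacent to all others) and concludes via row reduction and rank--nullity that $A+I$ has nullity at least $|\mathcal{D}|$, which is exactly the statement that your vectors $e_1-e_{p_j}$ span a $|\mathcal{D}|$-dimensional subspace of $\ker(A+I)$. The underlying key fact ($2p>n$ forces $p$ to be coprime to every other vertex) and the resulting twin structure are the same in both proofs, so this is a presentational rather than a substantive difference.
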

	
	\begin{proof}
		Let $\mathcal{D}$ be the collection of all the prime numbers which are greater than $\frac{n}{2}$ but less than $n$. Let $p\in \mathcal{D}$. 
		Then $\gcd(p,a)=1$ for all $1\le a< p$ and $p<a\le n$.
		This implies that $p$ is adjacent to all the members of the graph $TCG_n$.
		Thus, the subgraph of $TCG_n$ induced by the set $\mathcal{D}\cup \{1\}$ is a complete graph on $|\mathcal{D}|+1$ vertices.
		We shall index the rows and columns of $A(TCG_n)$ as follows:
		We first index the vertex $1$, we then index all the vertices of $TCG_n$ which are present in $\mathcal{D}$, and finally we index all the remaining vertices of 
		$TCG_n$.
		Using the above indexing, we find that $A(TCG_n)$ takes the following form:
		
		\begin{equation}
			\label{Eq0}
			\begin{split}
				A=A(TCG_n)&=
				\\
				&
				\left[
				\begin{array}{cccccccccccccccccc}
					0 & 1 & 1& \ldots &\dots & \textbf{1}&1 & 1 & 1& \ldots&\ldots &\ldots& 1\\
					1 & 0 & 1& \ldots &\dots &\textbf{1}&1 & 1 & 1& \ldots &\ldots&\ldots& 1\\
					1& 1 & 0 &\ldots&\dots& \textbf{1}&1 & 1 & 1& \ldots &\ldots&\ldots& 1\\
					\ldots& \ldots &\ldots &\ldots&\ldots& \ldots &\ldots &\ldots&\ldots&\ldots&\ldots&\ldots&\ldots\\
					\ldots& \ldots &\ldots &\ldots&\ldots& \ldots &\ldots &\ldots&\ldots&\ldots&\ldots&\ldots&\ldots\\
					\ldots& \ldots &\ldots &\ldots&\ldots& \ldots &\ldots &\ldots&\ldots&\ldots&\ldots&\ldots&\ldots\\
					\textbf{1}& \textbf{1} &\textbf{1}&\ldots &\ldots& \textbf{0} &\textbf{1} & \textbf{1} & \textbf{1}& \ldots &\ldots&\ldots& \textbf{1}\\
					1 & 1 &1  & \ldots&\ldots&\textbf{1} & 0 & a_{12} &a_{13}& \ldots&\ldots&\ldots&a_{1m}\\
					1 & 1 &1 & \ldots&\ldots & \textbf{1}& a_{21} & 0 &a_{23}& \ldots&\ldots&\ldots&a_{2m}\\
					1& 1 & 1 &\ldots&\dots& \textbf{1}&a_{31} & a_{32} & 0& \ldots &\ldots&\ldots& a_{3m}\\
					\ldots& \ldots &\ldots &\ldots&\ldots& \ldots &\ldots &\ldots&\ldots& \ldots&\ldots&\ldots&\ldots \\
					\ldots& \ldots &\ldots &\ldots&\ldots& \ldots &\ldots &\ldots&\ldots& \ldots &\ldots&\ldots&\ldots\\
					\ldots& \ldots &\ldots &\ldots&\ldots& \ldots &\ldots &\ldots&\ldots& \ldots &\ldots&\ldots&\ldots\\
					\ldots& \ldots &\ldots &\ldots&\ldots& \ldots &\ldots &\ldots&\ldots& \ldots &\ldots&\ldots&\ldots\\
					\ldots& \ldots &\ldots &\ldots&\ldots& \ldots &\ldots &\ldots&\ldots&\ldots&\ldots&\ldots&\ldots\\
					1 & 1 & 1 &  \ldots& \dots&\textbf{1}& a_{m1} & a_{m2} &  a_{m3}&\ldots&\ldots&\ldots& 0\\
				\end{array}
				\right].
			\end{split}
		\end{equation}
		Here $m=n-1-|\mathcal{D}|.$

		Consider the matrix $A+I$, where $I$ is the identity matrix of order $n$.
		We have,
		
		\begin{equation}
			\label{Eq1}
			\begin{split}
				A+I&=
				\\
				&
				\left[
				\begin{array}{cccccccccccccccccc}
					1 & 1 & 1& \ldots &\dots & \textbf{1}&1 & 1 & 1& \ldots&\ldots &\ldots& 1\\
					1 & 1 & 1& \ldots &\dots &\textbf{1}&1 & 1 & 1& \ldots &\ldots&\ldots& 1\\
					1& 1 & 1 &\ldots&\dots& \textbf{1}&1 & 1 & 1& \ldots &\ldots&\ldots& 1\\
					\ldots& \ldots &\ldots &\ldots&\ldots& \ldots &\ldots &\ldots&\ldots&\ldots&\ldots&\ldots&\ldots\\
					\ldots& \ldots &\ldots &\ldots&\ldots& \ldots &\ldots &\ldots&\ldots&\ldots&\ldots&\ldots&\ldots\\
					\ldots& \ldots &\ldots &\ldots&\ldots& \ldots &\ldots &\ldots&\ldots&\ldots&\ldots&\ldots&\ldots\\
					\textbf{1}& \textbf{1} &\textbf{1}&\ldots &\ldots& \textbf{1} &\textbf{1} & \textbf{1} & \textbf{1}& \ldots &\ldots&\ldots& \textbf{1}\\
					1 & 1 &1  & \ldots&\ldots&\textbf{1} & 1 & a_{12} &a_{13}& \ldots&\ldots&\ldots&a_{1m}\\
					1 & 1 &1 & \ldots&\ldots & \textbf{1}& a_{21} & 1 &a_{23}& \ldots&\ldots&\ldots&a_{2m}\\
					1& 1 & 1 &\ldots&\dots& \textbf{1}&a_{31} & a_{32} & 1& \ldots &\ldots&\ldots& a_{3m}\\
					\ldots& \ldots &\ldots &\ldots&\ldots& \ldots &\ldots &\ldots&\ldots& \ldots&\ldots&\ldots&\ldots \\
					\ldots& \ldots &\ldots &\ldots&\ldots& \ldots &\ldots &\ldots&\ldots& \ldots &\ldots&\ldots&\ldots\\
					\ldots& \ldots &\ldots &\ldots&\ldots& \ldots &\ldots &\ldots&\ldots& \ldots &\ldots&\ldots&\ldots\\
					\ldots& \ldots &\ldots &\ldots&\ldots& \ldots &\ldots &\ldots&\ldots& \ldots &\ldots&\ldots&\ldots\\
					\ldots& \ldots &\ldots &\ldots&\ldots& \ldots &\ldots &\ldots&\ldots&\ldots&\ldots&\ldots&\ldots\\
					1 & 1 & 1 &  \ldots& \dots&\textbf{1}& a_{m1} & a_{m2} &  a_{m3}&\ldots&\ldots&\ldots& 1\\
				\end{array}
				\right].
			\end{split}
		\end{equation}
		
		We find that the first $\mathcal{D}+1$ rows of the matrix $A+I$ are identical.
		Hence we apply the row operation $R_i=R_i-R_1$ for $2\le i\le \mathcal{D}+1$.
		On applying the given row operations, the matrix $A+I$ reduces to the following matrix:
		
		\begin{equation}
			\label{Eq2}
			\left[
			\begin{array}{cccccccccccccccccccccccccccccccccccccccccccccc}
				1 & 1 & 1& \ldots &\dots & \textbf{1}&1 & 1 & 1& \ldots&\ldots &\ldots& 1\\
				0 & 0 & 0& \ldots &\dots &\textbf{0}&0 & 0 & 0& \ldots &\ldots&\ldots& 0\\
				0& 0 & 0 &\ldots&\dots& \textbf{0}&0 & 0 & 0& \ldots &\ldots&\ldots& 0\\
				\ldots& \ldots &\ldots &\ldots&\ldots& \ldots &\ldots &\ldots&\ldots&\ldots&\ldots&\ldots&\ldots\\
				\ldots& \ldots &\ldots &\ldots&\ldots& \ldots &\ldots &\ldots&\ldots&\ldots&\ldots&\ldots&\ldots\\
				\ldots& \ldots &\ldots &\ldots&\ldots& \ldots &\ldots &\ldots&\ldots&\ldots&\ldots&\ldots&\ldots\\
				\textbf{0}& \textbf{0} &\textbf{0}&\ldots &\ldots& \textbf{0} &\textbf{0} & \textbf{0} & \textbf{0}& \ldots &\ldots&\ldots& \textbf{0}\\
				\cline{1-13}
				&   &                      &   & \multicolumn{1}{c|}{}  &   &                        \\
				&   & \makebox[0pt]{\huge{$J^T$}}    &   & \multicolumn{1}{c|}{}  &&   &&\makebox[0pt]{\huge{$A(G')$}}    \\
				&   &                      &   & \multicolumn{1}{c|}{}  &   &
			\end{array} \;
			\right].
		\end{equation}
		Here $G'$ is the induced subgraph of $TCG_n$ on the set $\mathbb Z_n\setminus \{\mathcal{D}\cup \{1\}\}.$ 
		$A(G')$ denotes the adjacency matrix of $G'$, and $J$ denotes the all one matrix.
		We note that the matrix in \Cref{Eq2}  has $\mathcal{D}$ identical rows of the form: 
		$\begin{array}{ccccccccccccccccc}
			(0 & 0 & 0 &\dots & 0 &0 &0&0&0&\dots&0 &0 ).
		\end{array}$
		Thus rank of $A+I$ is less than $n-\mathcal{D}$. Using the Rank-Nullity Theorem, we find that  $0$ is an eigenvalue of $A+I$ with multiplicity at least $\mathcal{D}$. Hence, we conclude that $-1$ is an eigenvalue of $A$ with multiplicity at least $\mathcal{D}$.

	\end{proof}

	\begin{proposition}
		If $n=3$, then $-1$ is an eigenvalue of the adjacency matrix of $TCG_n$ with multiplicity $2$.
	\end{proposition}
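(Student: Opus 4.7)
The plan is to reduce to a direct computation, since $n=3$ is a small enough case that the spectrum can be written down explicitly. By the earlier proposition establishing that $TCG_n$ is complete precisely when $n=3$, we have $TCG_3\cong K_3$. Thus the adjacency matrix is
\[
A(TCG_3)=J-I,
\]
where $J$ is the $3\times 3$ all-ones matrix and $I$ is the $3\times 3$ identity.

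Next, I would read off the spectrum of $J-I$ from the well-known spectrum of $J$: the matrix $J$ has rank $1$, with eigenvalue $3$ of multiplicity one (eigenvector the all-ones vector) and eigenvalue $0$ of multiplicity two (eigenspace the orthogonal complement of the all-ones vector). Subtracting $I$ shifts every eigenvalue down by $1$, so the eigenvalues of $A(TCG_3)$ are $2$ with multiplicity $1$ and $-1$ with multiplicity $2$. This yields the claim directly.

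There is no real obstacle here. I note only that \Cref{number} alone is not sharp enough to conclude the statement: for $n=3$, the set $\mathcal{D}$ of primes strictly between $n/2=1.5$ and $n=3$ is $\{2\}$, so that theorem only guarantees multiplicity at least $1$. The extra multiplicity comes from the vertex $1$ contributing an additional dependence in $A+I$, which in the $K_3$ case collapses the rank of $A+I$ all the way down to $1$. Hence the direct $K_3$ computation is the cleanest route.
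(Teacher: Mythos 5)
Your proof is correct and follows essentially the same route as the paper, which simply observes that $TCG_3$ is the complete graph $K_3$ and invokes its well-known spectrum $\{2, -1, -1\}$; you merely spell out the $J-I$ computation that the paper leaves implicit. Your side remark that Theorem~\ref{number} only guarantees multiplicity at least $1$ here is accurate but not needed for the argument.
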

	
	\begin{proof}
		If $n=3$, then $TCG_n$ is a complete graph on $3$ vertices. Hence the result follows.
	\end{proof}
	
	\begin{theorem}
		\label{number1}
		If $n>3$, then $-1$ is an eigenvalue of the adjacency matrix of $TCG_n$ with multiplicity at least $1$.
	\end{theorem}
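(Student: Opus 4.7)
The plan is to reduce this statement to \Cref{number}, which has already shown that $-1$ appears as an eigenvalue of $A(TCG_n)$ with multiplicity at least $|\mathcal{D}|$, where $\mathcal{D}$ is the set of primes lying strictly between $n/2$ and $n$. Hence it suffices to exhibit, for every $n > 3$, a single prime in the interval $(n/2, n)$.

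To produce such a prime I would invoke Bertrand's postulate (\Cref{BT}) applied to the integer $k = \lfloor n/2 \rfloor$: for $n > 3$ one has $k \geq 2$, so there exists a prime $p$ with $k < p \leq 2k$. I would then check, by a quick split on the parity of $n$, that $p$ lies in the required window. If $n$ is even then $k = n/2$ and $2k = n$, giving $n/2 < p \leq n$; the upper endpoint $p = n$ is ruled out because $n$ is even and greater than $2$, hence composite. If $n$ is odd then $k = (n-1)/2$, so $p \leq n - 1 < n$, and the integrality of $p > (n-1)/2$ forces $p \geq (n+1)/2 > n/2$. In either case $p \in \mathcal{D}$.

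Combining the two ingredients, $|\mathcal{D}| \geq 1$, and \Cref{number} immediately yields that $-1$ occurs in the spectrum of $A(TCG_n)$ with multiplicity at least $1$. No real obstacle arises: the spectral argument is already encoded in \Cref{number}, and the remaining work is the elementary number-theoretic observation that Bertrand's postulate supplies a prime in the half-open window $(n/2, n)$ whenever $n > 3$, with only the trivial parity case-check above needed to rule out the boundary value $p = n$.
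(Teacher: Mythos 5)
Your proposal is correct and follows essentially the same route as the paper: invoke Bertrand's postulate to show $\mathcal{D}$ is non-empty and then apply \Cref{number}. The only difference is that you carry out the parity case-check to verify the prime actually lands strictly inside $(n/2,n)$, which the paper's proof asserts without detail, so your version is if anything slightly more careful.
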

	
	\begin{proof}
		Consider $n>3$. Then using \Cref{BT} we can always find a prime number greater than $\frac{n}{2}$ and less than $n$. Thus $\mathcal{D}$ is non-empty and has at least one element in it. Using \Cref{number}, we conclude that $-1$ is an eigenvalue of the adjacency matrix of $TCG_n$ with multiplicity at least $1$.
	\end{proof}
	
	\begin{theorem}
		\label{number2}
		If $n>3$ is a prime number, then $-1$ is an eigenvalue of the adjacency matrix of $TCG_n$ with multiplicity at least $2$.
	\end{theorem}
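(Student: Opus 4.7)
The plan is to strengthen the argument in \Cref{number} by exploiting the fact that when $n$ itself is prime, the vertex $n$ is a ``universal'' vertex of $TCG_n$ (adjacent to every other vertex), exactly like each element of $\mathcal{D}$. Indeed, for any $1\le k\le n-1$ we have $\gcd(n,k)=1$ because $n$ is prime and $k<n$ forces $n\nmid k$. Crucially, $n$ itself is \emph{not} an element of $\mathcal{D}$ since the definition of $\mathcal{D}$ demands the strict inequality $p<n$, so $n$ provides an additional universal vertex beyond those already counted by $|\mathcal{D}|$.

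First I would invoke \Cref{BT} to obtain at least one prime strictly between $n/2$ and $n$, which already guarantees $|\mathcal{D}|\ge 1$. Then I would mimic the indexing used in the proof of \Cref{number}, but with an enlarged leading block that lists vertex $1$, then the primes in $\mathcal{D}$, then the vertex $n$, and finally the remaining composite vertices. Because every vertex in $\{1\}\cup\mathcal{D}\cup\{n\}$ is universal, the first $|\mathcal{D}|+2$ rows of $A(TCG_n)+I$ are identical all-ones vectors. Applying the row operations $R_i\leftarrow R_i-R_1$ for $2\le i\le |\mathcal{D}|+2$ yields $|\mathcal{D}|+1$ zero rows, in direct analogy with \Cref{Eq2}. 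The Rank--Nullity Theorem then shows that the nullity of $A(TCG_n)+I$ is at least $|\mathcal{D}|+1\ge 2$, so $-1$ is an eigenvalue of $A(TCG_n)$ with multiplicity at least $2$.

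I do not anticipate a real obstacle; this is essentially a one-line enhancement of \Cref{number}, with Bertrand's Postulate supplying one universal prime inside $(n/2,n)$ and the primality of $n$ supplying one more. The only point meriting a brief sanity check is that $n$ and every $p\in\mathcal{D}$ are mutually adjacent in $TCG_n$, which is immediate because distinct primes are coprime.
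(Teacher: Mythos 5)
Your proof is correct and follows essentially the same route as the paper: Bertrand's Postulate supplies one universal prime in $(n/2,n)$ and the primality of $n$ supplies a second, after which the rank--nullity argument of \Cref{number} gives multiplicity at least $2$ for the eigenvalue $-1$. If anything, you are more careful than the paper, which simply asserts $n\in\mathcal{D}$ even though $\mathcal{D}$ was defined to contain only primes \emph{strictly less than} $n$; your explicit enlargement of the universal block to $\{1\}\cup\mathcal{D}\cup\{n\}$ repairs that small inconsistency.
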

	
	\begin{proof}
		Consider $n>3$. Then using \Cref{BT}, we can always find a prime number greater than $\frac{n}{2}$ and less than $n$. Moreover, $n$ is itself a prime number that is greater than $n$ but less than or equal to $n$. Thus $n\in \mathcal{D}$, which implies that  $\mathcal{D}$ has cardinality at least $2$. Using \Cref{number}, we conclude that $-1$ is an eigenvalue of the adjacency matrix of $TCG_n$ with multiplicity at least $2$.
	\end{proof}
	
	\begin{example}
		Suppose $n=6$.
		Then the only prime number that is less than or equal to $6$ but greater than $3$ is $5$. Then $\mathcal{D}=\{5\}$.
		We follow the same indexing as given in \Cref{number}. 
		We first index the element $1$, then the element in $\mathcal{D}$ i.e. $5$, and finally all the remaining elements of $TCG_6$.
		Using the above indexing, the adjacency matrix of $TCG_n$ takes the following form:
		\[
		\begin{blockarray}{ccccccc}
			&1 & 5 & 2 & 3 & 4 & 6 \\
			\begin{block}{c(cccccc)}
				1 & 0 & 1 & 1 & 1 & 1 & 1\\
				5 & 1 & 0 & 1 & 1 & 1 & 1 \\
				2 & 1 & 1 & 0 & 1 & 0 & 0 \\
				3 & 1 & 1 & 1 & 0 & 1 & 0\\
				4 & 1 & 1 & 1 & 1 & 0 & 0\\
				6 & 1 & 1 & 0 & 0 & 0 & 0\\
			\end{block}
		\end{blockarray}
		\]
		The eigenvalues of the adjacency matrix of $TCG_6$ are $0, -1, -2, -1.29, 0.39, 3.89$. We find that $-1$ occurs as an eigenvalue of the adjacency matrix of $TCG_6$ which is in accordance with \Cref{number1}.
	\end{example}
	
	\begin{example}
		We now consider a prime number.
		Suppose $n=11$.
		Then the only prime numbers that are less than or equal to $11$ but greater than $5.5$ are $5$. Then $\mathcal{D}=\{7,11\}$.
		We follow the same indexing as given in \Cref{number}. 
		We first index the element $1$, then the elements in $\mathcal{D}$ i.e. $7$ and $11$, and finally all the remaining elements of $TCG_6$.
		Using the above indexing, the adjacency matrix of $TCG_n$ takes the following form:
		\[
		\begin{blockarray}{ccccccccccccccccccccccccc}
			&1 & 7 & 11 & 2 & 3 & 4 & 5 & 6 & 8 & 9 & 10 \\
			\begin{block}{c(cccccccccccccccccccccccc)}
				1 & 0&1&1&1&1&1&1&1&1&1&1\\
				7 & 1&0&1&0&1&0&1&0&1&0&1\\
				11 & 1&1&0&1&1&0&1&1&0&1&1\\
				2&1&0&1&0&1&0&1&0&1&0&1 \\
				3&1&1&1&1&0&1&1&1&1&0&1 \\
				4&1&0&0&0&1&0&1&0&0&0&1 \\
				5&1&1&1&1&1&1&0&1&1&1&1 \\
				6&1&0&1&0&1&0&1&0&1&0&1 \\
				8&1&1&0&1&1&0&1&1&0&1&1 \\
				9&1&0&1&0&0&0&1&0&1&0&1\\
				10&1&1&1&1&1&1&1&1&1&1&0\\
			\end{block}
		\end{blockarray}
		\]
		After necessary calculations, we find that the eigenvalues of the adjacency matrix of $TCG_6$ are $-1, -1, 0, 0, 0, -3.21, -2.29, -1.36, 0.33, 0.67$, and $7.87$. We find that $-1$ occurs as an eigenvalue of the adjacency matrix of $TCG_{11}$ twice which is in accordance with \Cref{number2}.
	\end{example}

	\begin{theorem}
		\label{claim1}
		If $n>3$, then the adjacency matrix of $TCG_n$ is singular.
	\end{theorem}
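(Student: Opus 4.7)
The plan is to exhibit an explicit pair of identical rows in $A(TCG_n)$ for every $n\geq 4$; this immediately yields a nontrivial linear dependence among the rows and hence $\det A(TCG_n)=0$. Since the claim is about singularity rather than a specific eigenvector decomposition, producing two equal rows is the cheapest route.

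The candidates I would use are the rows indexed by the vertices $2$ and $4$, which both exist as soon as $n>3$. The idea is that for any integer $k\in\{1,2,\ldots,n\}$ we have $\gcd(2,k)=1$ if and only if $k$ is odd, and likewise $\gcd(4,k)=1$ if and only if $k$ is odd. Hence vertex $2$ and vertex $4$ have exactly the same open neighbourhood in $TCG_n$, namely the set of odd integers in $\{1,2,\ldots,n\}$.

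First I would formally compare the two rows entry by entry. For each $k\notin\{2,4\}$, the $k$th entry of row $2$ equals the $k$th entry of row $4$ by the coprimality observation above. For the diagonal entries at positions $2$ and $4$, both rows are zero: on the diagonal because the graph is simple, and at the ``cross'' positions because $\gcd(2,4)=2\neq 1$, so $2\not\sim 4$. Thus the rows agree in every coordinate, giving $R_2=R_4$.

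Once this is established the conclusion is immediate: the vector with $+1$ in coordinate $2$, $-1$ in coordinate $4$, and zeros elsewhere lies in the left null space of $A(TCG_n)$, so $A(TCG_n)$ has rank at most $n-1$ and is therefore singular. I do not foresee any real obstacle here; the only thing to be careful about is the hypothesis $n>3$, which is used precisely to ensure that the vertex $4$ is present in the graph.
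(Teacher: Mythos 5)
Your proposal is correct and follows essentially the same route as the paper, which also argues that the row of vertex $2$ coincides with the row of vertex $2^k$ (the case $k=2$, i.e.\ vertex $4$, being the relevant instance for $n>3$). Your version is in fact slightly more careful than the paper's, since you explicitly verify the agreement of the entries at the positions $2$ and $4$ themselves (diagonal zeros and $2\not\sim 4$), a detail the paper passes over.
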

	
	\begin{proof}
		Since $n\ge 4$, it follows that $2$ will always be a vertex of $TCG_n$.
		Now consider a vertex $v$ of $TCG_n$. Then we claim that $\gcd(v,2)=1$ if and only if $\gcd(v,2^k)=1$ for any natural number $k$. Assume that $\gcd(v,2)=1$. Let $d=gcd(v,2^k)$. This implies $d\mid 2^k$, which further implies $d$ to be an even number. Since $\gcd(v,2)=1$, it is not possible for $d$ must be an even number. Hence we reach a contradiction. The converse part is obvious.
		Thus, we find that $v$ is adjacent to the vertex $2$ of $TCG_n$ if and only if $v$ is adjacent to the vertex $2^k$ of $TCG_n$ for all natural numbers $k$.
		This shows that the row of the adjacency matrix corresponding to the vertex $2$ of $TCG_n$ is the same as the rows corresponding to the vertex $2^k$ of $TCG_n$ for all natural numbers $k$. Thus we find that there always exists two rows in the adjacency matrix of $TCG_n$ which are similar.
		This proves that the determinant of the adjacency matrix of $TCG_n$ is zero. Thus the adjacency matrix of $TCG_n$ is always singular.
	\end{proof}
	
	Now, the question that naturally arises is if it is possible to determine how many times $0$ will occur as an eigenvalue of the adjacency matrix of $TCG_n$.
	
	In the next theorem, we shall provide a result through which we can determine a lower bound on the multiplicity of the eigenvalue $0$ of the adjacency matrix of $TCG_n$.
	
	\begin{theorem}
		\label{number4}
		
		Let \( TCG_n \) be the coprime graph with vertex set \( \{1, 2, \ldots, n\} \), where \( n > 3 \). Let \( \{p_1, p_2, \ldots, p_m\} \) denote the set of all prime numbers less than or equal to \( n \). For each \( p_i \), define \( k_i \) to be the largest positive integer such that \( p_i^{k_i} \leq n \) but \( p_i^{k_i+1} > n \). Then the multiplicity of the eigenvalue \( 0 \) in the adjacency matrix of \( TCG_n \) is at least $
		\left( \sum_{i=1}^{m} k_i \right) - m.$
	\end{theorem}

	\begin{proof}
		Let us consider the prime number $p_1$ such that $p_1< n$.
		Moreover, assume that $p_1^{k_1}\le n$ but $p_1^{k_1+1}>n$.
		Now, following similar arguments as given in \Cref{claim1}, it is clear that $\gcd(p_1,n)=1$ if and only if $\gcd(p_1^{t_1},n)=1$, where $1\le t_1\le k_1$.
		This shows that the row corresponding to the vertex $p_1$ is identical to the rows corresponding to the vertices $p_1^{t_1}$ for $1\le t_1\le k_1$.
		Hence, the adjacency matrix of $TCG_n$ will have $0$ as an eigenvalue with multiplicity at least $k_1-1$.
		
		If we apply the same argument as used for $p_1$ to all other prime numbers $p_i$ where $1\le i\le m$, we find that the adjacency matrix of $TCG_n$ will have $0$ as an eigenvalue with multiplicity at least $k_i-1$ for each $1\le i\le m$. This proves that the adjacency matrix of $TCG_n$ will have $0$ as an eigenvalue with multiplicity at least $ \underbrace{(k_1-1)+(k_2-1)+\cdots+(k_m-1)}_{m \hspace{2mm} \text{times}}$. Thus, the multiplicity of $0$ as an eigenvalue of the adjacency matrix of $TCG_n$ is at least $(\sum_{i=1}^m k_i) -m$.
	\end{proof}
	Now, we shall elaborate \Cref{number4} through an example.
	\begin{example}
		Consider $n=12$.
		Then the prime numbers that are less than $12$ are $2,3,5,7,11$.
		So $m=5$.
		Here $p_1=2,p_2=3,p_3=5,p_4=7,p_5=11$.
		Now, $p_1^3=2^3=8<12.$ So, $k_1=3.$
		Again, $p_2^2=3^2=9<12$. So, $k_2=2$.
		Similarly, $k_3=k_4=k_5=1$.
		Thus, according to \Cref{number4}, the number of times the adjacency matrix of $TCG_n$ will have $0$ as an eigenvalue will be at least $(3+2+1+1+1)-5 = 3.$
		The eigenvalues of the adjacency matrix of  $TCG_{12}$ are $-1, -1, 0, 0, 0, 0, -3.49, -2.59, -1.36, 0.37, 0.95$, and $8.13$.
		So \Cref{number4} provides a lower bound on the multiplicity of the eigenvalue $0$ of the adjacency matrix of $TCG_{12}$.
		
	\end{example}
	
	In the next example, we show that the bound obtained in \Cref{number4} on the multiplicity of the eigenvalue $0$ of the adjacency matrix of $TCG_n$
	is {strict}.
	
	\begin{example}
		Consider $n=10$.
		Then the prime numbers that are less than $10$ are $2,3,5,7$.
		So $m=4$.
		Here $p_1=2,p_2=3,p_3=5,p_4=7$.
		Now, $p_1^3=2^3=8<12.$ So, $k_1=3.$
		Again, $p_2^2=3^2=9<12$. So, $k_2=2$.
		Similarly, $k_3=k_4=1$.
		Thus, according to \Cref{number4}, the number of times the adjacency matrix of $TCG_n$ will have $0$ as an eigenvalue will be at least $(3+2+1+1)-4 = 3.$
		The eigenvalues of the adjacency matrix of  $TCG_{10}$ are $-1, 0, 0, 0, -3.07, -2.21, -1.33, 0.29, 0.67$ and $ 6.65$
		So \Cref{number4} provides a {strict} lower bound on the multiplicity of the eigenvalue $0$ of the adjacency matrix of $TCG_{10}$.
		
	\end{example}

	For large $n$, it is quite difficult to determine all the eigenvalues of the adjacency matrix of a graph. Consequently,  bounds are provided on the spectral radius of $G$, see for example \cite{hong2001sharp, das2004some}.
	
	In the last result, we shall provide a lower bound on the spectral radius of $TCG_n$.
	\begin{theorem}
		If $n\ge 3$, then the spectral radius of $TCG_n$ is bounded below by $2$, where equality holds if and only if $n=3$.
	\end{theorem}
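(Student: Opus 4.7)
My plan is to handle the boundary case $n=3$ by direct computation and the strict case $n\ge 4$ by an application of the Eigenvalue Interlacing Theorem (\Cref{ELT}) to a small induced subgraph.

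For $n=3$, observe that $TCG_3$ is just $K_3$, whose adjacency spectrum is $\{2,-1,-1\}$. Hence the spectral radius equals $2$ exactly, which gives the equality case.

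For $n\ge 4$, I would consider the $4\times 4$ principal submatrix of $A(TCG_n)$ indexed by $\{1,2,3,4\}$; this coincides with $A(TCG_4)$ because the coprimality relation on $\{1,2,3,4\}$ does not depend on $n$. Inside $TCG_4$, the vertices $2$ and $4$ have identical open neighbourhoods $\{1,3\}$, so the rows of $A(TCG_4)$ corresponding to $2$ and $4$ agree. This forces $0$ to be an eigenvalue, and one checks directly that $(1,0,-1,0)^{T}$ is an eigenvector for the eigenvalue $-1$. The remaining two eigenvalues can be extracted by restricting to the symmetric subspace of vectors of the form $(a,b,c,b)^{T}$: the eigenvalue equation collapses to a small linear system in $a,b,c$ which, after eliminating $b$, reduces to the quadratic $\lambda^{2}-\lambda-4=0$. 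Its larger root is $\frac{1+\sqrt{17}}{2}$, and since $\sqrt{17}>3$ we conclude $\frac{1+\sqrt{17}}{2}>2$.

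An application of \Cref{ELT} (with $m=4$) then yields $\lambda_n(TCG_n)\ge \lambda_4(TCG_4)=\frac{1+\sqrt{17}}{2}>2$ for every $n\ge 4$, which is the desired strict inequality and, combined with the $n=3$ case, gives the ``if and only if'' conclusion. The only non-trivial step is the spectral computation for the $4\times 4$ matrix $A(TCG_4)$; once $\frac{1+\sqrt{17}}{2}$ is identified, interlacing handles all $n\ge 4$ uniformly, with no need to analyse the large-$n$ structure of $TCG_n$ or to invoke Bertrand's postulate.
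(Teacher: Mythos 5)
Your proof is correct, and it follows the same basic strategy as the paper: identify a small principal submatrix of $A(TCG_n)$ whose largest eigenvalue is known, and push that bound up to $\lambda_n(TCG_n)$ via the Eigenvalue Interlacing Theorem. The difference is which submatrix you interlace against, and it matters. The paper interlaces against $A(TCG_3)$, whose largest eigenvalue is exactly $2$; interlacing then only yields $\lambda_n(TCG_n)\ge 2$, and the paper's assertion that the inequality is \emph{strict} for $n>3$ is not actually justified by that step. You instead interlace against $A(TCG_4)$, whose spectrum you compute to be $\bigl\{0,-1,\tfrac{1-\sqrt{17}}{2},\tfrac{1+\sqrt{17}}{2}\bigr\}$ (this matches the paper's numerical table for $n=4$), so interlacing with $k=m=4$ gives $\lambda_n(TCG_n)\ge \tfrac{1+\sqrt{17}}{2}>2$ for all $n\ge 4$, and the equality case $n=3$ follows from the direct computation for $K_3$. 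One cosmetic remark: the invariant subspace of vectors $(a,b,c,b)^T$ is three-dimensional and carries the eigenvalues $-1$ and $\tfrac{1\pm\sqrt{17}}{2}$ (its characteristic polynomial is $(\lambda+1)(\lambda^2-\lambda-4)$), so your quadratic captures the ``remaining two'' only after the factor $\lambda+1$ corresponding to your eigenvector $(1,0,-1,0)^T$ has been divided out; this does not affect the conclusion. Net effect: your argument proves the stated ``if and only if'' cleanly, whereas the paper's own proof leaves the strictness step implicit.
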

	
	\begin{proof}
		We find that for any $n\ge 3$, the adjacency matrix of $TCG_n$ is a principal submatrix of the adjacency matrix of $TCG_{n+1}$.
		If we  consider $TCG_3$, then the adjacency matrix for $TCG_n$ looks like this:
		\[
		A=
		\left[
		\begin{array}{cccccc}
			0      & 1 & 1  \\
			1     & 0 & 1\\
			1 & 1 & 0
		\end{array}
		\right]
		\]
		The eigenvalues of $A$ are $2$ with multiplicity $1$, and $-1$ with multiplicity $2$. 
		Since the adjacency matrix of a graph is a symmetric matrix, 
		using \Cref{ELT} we observe that for any $n\ge 3$, the largest eigenvalue of the adjacency matrix of $TCG_n$ is greater than the largest eigenvalue of the adjacency matrix of $TCG_3$.
		Thus, the spectral radius of $TCG_n$ is greater than $2$, where equality holds if and only if $n=3$.
	\end{proof}
	
	\section{Conclusion}
	
	In this paper, we have studied the coprime graph of the set of integers.
	Initially, we studied several structural properties of $TCG_n$.
	We then studied the adjacency spectrum of $TCG_n$.
	In this section, we shall provide some directions for future work for the readers. In \Cref{number}, we found a lower bound on the multiplicity of the eigenvalue $-1$ of the adjacency matrix of $TCG_n$. We encourage the readers to find the exact multiplicity of the eigenvalue $-1$.
	Similarly, in \Cref{number4}, we found a lower bound on the multiplicity of the eigenvalue $0$ of the adjacency matrix of $TCG_n$. We encourage the readers to find the exact multiplicity of the eigenvalue $0$.

	\section{Appendix}
	\label{appendix}
	In this section, we list out the eigenvalues of $TCG_n$ for $3\le n\le 15.$
	The calculations have been done using MATLAB.
	\begin{table}[H]
		\centering
		\begin{tabular}{c|c}
			$n$  &  \text{Adjacency Spectrum}\\
			\hline
			3 & [2, -1, -1]
			\\
			\hline
			4 & [0, -1, -1.56, 2.56]
			\\
			\hline
			5 & [0, -1, -1, -1.64, 3.64]
			\\
			\hline
			6 & [0, -1, -2, -1.29, 0.39, 3.89]
			\\
			\hline
			7 & [0, -1, -1, -2.16, -1.29, 0.42, 5.04]
			\\
			\hline
			8 & [0, 0, -1, -1, -2.62, -1.39, 0.44, 5.56]
			\\
			\hline
			9 & [-1, -1, 0, 0, 0, -2.67, -2.14, 0.49, 6.32]
			\\
			\hline
			10 & [-1, 0, 0, 0, -3.07, -2.21, -1.33, 0.29, 0.67, 6.65]\\
			\hline
			11 & [-1, -1, 0, 0, 0, -3.21, -2.29, -1.36, 0.33, 0.67, 7.87]\\
			\hline
			12 & [-1, -1, 0, 0, 0, 0, -3.49, -2.59, -1.36, 0.37, 0.95, 8.13]\\
			\hline
			13 & [-1, -1, -1, 0, 0, 0, 0, -3.70, -2.61, -1.38, 0.39, 0.96, 9.35]\\
			\hline
			14 & [-1, -1, 0, 0, 0, 0, -1.61, 0.61, -4.03, -2.72, -1.25, 0.26, 1.01, 9.72]
			\\
			\hline
			15 & [-1, -1, 0, 0, 0, 0, -4.09, -3.26, -2.02, -1.37, 0.14, 0.49, 0.78, 1.11, 10.22]
		\end{tabular}
		\caption{Eigenvalues of Adjacency Matrix of $TCG_n$}
		\label{tab:my_label}
	\end{table}


\begin{thebibliography}{99}
		
		\bibitem{alsaluli2024laplacian}
		A.~Alsaluli, W.~Fakieh, and H.~Alashwali, ``Laplacian spectrum and vertex connectivity of the unit graph of the ring $\mathbb{Z}_{p^rq^s}$,'' \emph{Axioms}, vol.~13, no.~12, p.~873, 2024.
		
		\bibitem{rehman2024randic}
		N.~U. Rehman, A.~M. Alghamdi, and E.~S. Almotairi, ``Randić spectrum of the weakly zero-divisor graph of the ring $\mathbb{Z}_n$,'' \emph{AKCE International Journal of Graphs and Combinatorics}, pp.~1--8, 2024.
		
		\bibitem{rehman2024exploring}
		N.~U. Rehman and M.~Nazim, ``Exploring normalized distance Laplacian eigenvalues of the zero-divisor graph of ring $\mathbb{Z}_n$,'' \emph{Rendiconti del Circolo Matematico di Palermo Series 2}, vol.~73, no.~2, pp.~515--526, 2024.
		
		\bibitem{madhumitha2024graphs}
		S.~Madhumitha and S.~Naduvath, ``Graphs on groups in terms of the order of elements: A review,'' \emph{Discrete Mathematics, Algorithms \& Applications}, vol.~16, no.~3, 2024.
		
		\bibitem{rather2023normalized}
		B.~A. Rather, H.~A. Ganie, and M.~Aouchiche, ``On normalized distance Laplacian eigenvalues of graphs and applications to graphs defined on groups and rings,'' \emph{Carpathian Journal of Mathematics}, vol.~39, no.~1, pp.~213--230, 2023.
		
		\bibitem{shen2023laplacian}
		S.~Shen, W.~Liu, and W.~Jin, ``Laplacian eigenvalues of the unit graph of the ring $\mathbb{Z}_n$,'' \emph{Applied Mathematics and Computation}, vol.~459, p.~128268, 2023.
		
		\bibitem{banerjee2023distance}
		Banerjee, S. (2023). Distance Laplacian spectra of various graph operations and its application to graphs on algebraic structures. \emph{Journal of Algebra and Its Applications}, 22(01), 2350022.
		
		\bibitem{banerjee2023structural}
		Banerjee, S. (2023). On structural and spectral properties of reduced power graph of finite groups. \emph{Asian-European Journal of Mathematics}, 16(09), 2350170.
		
		\bibitem{sriramgeneralised}
		S.~Sriram, P.~Veeramallan, and A.~D. Christopher, ``A generalised coprime graph—revisited,'' \emph{International Journal of Applied Graph Theory}, vol.~7, no.~1, pp.~1--10, 2023.
		
		\bibitem{banerjee2022spectra}
		Banerjee, S. (2022). Spectra and topological indices of comaximal graph of $\mathbb Z_n$. \emph{Results in Mathematics}, 77(3), 111.
		
		\bibitem{banerjee2022laplacian}
		Banerjee, S. (2022). Laplacian spectrum of comaximal graph of the ring $\mathbb Z_n$. \emph{Special Matrices}, 10(1), 285--298.
		
		\bibitem{hamm2021parameters}
		J.~Hamm and A.~Way, ``Parameters of the coprime graph of a group,'' \emph{International Journal of Group Theory}, vol.~10, no.~3, pp.~137--147, 2021.
		
		\bibitem{banerjee2021laplacian}
		S.~Banerjee, ``Laplacian spectra of coprime graph of finite cyclic and dihedral groups,'' \emph{Discrete Mathematics, Algorithms and Applications}, vol.~13, no.~3, p.~2150020, 2021.
		\bibitem{ma2014coprime}
		X.~L. Ma, H.~Q. Wei, and L.~Y. Yang, ``The coprime graph of a group,'' \emph{International Journal of Group Theory}, vol.~3, no.~3, pp.~13--23, 2014.
		
		\bibitem{brouwer2011spectra}
		A.~E. Brouwer and W.~H. Haemers, \emph{Spectra of Graphs}, Springer, 2011.
		
		\bibitem{rao2011creative}
		S.~N. Rao, ``A creative review on coprime (prime) graphs,'' \emph{Lecture Notes, DST Workshop, WGTA, BHU, Varanasi}, pp.~1--24, 2011.
		
		\bibitem{sander2009kernel}
		J.~W. Sander and T.~Sander, ``On the kernel of the coprime graph of integers,'' \emph{Integers}, vol.~9, no.~5, pp.~569--579, 2009.
		
		\bibitem{ahlswede2006maximal}
		R.~Ahlswede and V.~Blinovsky, ``Maximal sets of numbers not containing $k+1$ pairwise coprimes and having divisors from a specified set of primes,'' \emph{Journal of Combinatorial Theory, Series A}, vol.~113, no.~8, pp.~1621--1628, 2006.
		
		\bibitem{das2004some}
		K.~C. Das and P.~Kumar, ``Some new bounds on the spectral radius of graphs,'' \emph{Discrete Mathematics}, vol.~281, no.~1--3, pp.~149--161, 2004.
		
		\bibitem{west2001introduction}
		D.~B. West, \emph{Introduction to Graph Theory}, 2nd ed., Prentice Hall, Upper Saddle River, 2001.
		
		\bibitem{hong2001sharp}
		Y.~Hong, J.~L. Shu, and K.~Fang, ``A sharp upper bound of the spectral radius of graphs,'' \emph{Journal of Combinatorial Theory, Series B}, vol.~81, no.~2, pp.~177--183, 2001.
		
		\bibitem{pan2019full}
		J.~Pan and X.~Guo, ``The full automorphism groups, determining sets and resolving sets of coprime graphs,'' \emph{Graphs and Combinatorics}, vol.~35, no.~2, pp.~485--501, 2019.
		
		\bibitem{selvakumar2017classification}
		K.~Selvakumar and M.~Subajini, ``Classification of groups with toroidal coprime graphs,'' \emph{Australasian Journal of Combinatorics}, vol.~69, no.~2, pp.~174--183, 2017.
		
		\bibitem{dorbidi2016note}
		H.~R. Dorbidi, ``A note on the coprime graph of a group,'' \emph{International Journal of Group Theory}, vol.~5, no.~4, pp.~17--22, 2016.
		
		\bibitem{ireland2013classical}
		K.~Ireland and M.~Rosen, \emph{A Classical Introduction to Modern Number Theory}, vol.~84, Springer, 2013.
		
		\bibitem{schaefer2012graph}
		M.~Schaefer, ``The graph crossing number and its variants: A survey,'' \emph{Electronic Journal of Combinatorics}, Article DS21, 2012.
		
		\bibitem{sriram2014}
		S.~Mutharasu, N.~Mohamed Rilwan, M.~K. Angel Jebitha, and T.~Tamizh Chelvam, ``On generalized coprime graphs,'' \emph{Iranian Journal of Mathematical Sciences and Informatics}, vol.~9, no.~2, pp.~1--6, 2014.
		
		\bibitem{erdHos1997cycles}
		P.~Erd\H{o}s and G.~N. S\'ark\"ozy, ``On cycles in the coprime graph of integers,'' \emph{Electronic Journal of Combinatorics}, Article R8, 1997.
		
		\bibitem{ahlswede1996sets}
		R.~Ahlswede and L.~H. Khachatrian, ``Sets of integers and quasi-integers with pairwise common divisor,'' \emph{Acta Arithmetica}, vol.~74, no.~2, pp.~141--153, 1996.
		
		\bibitem{ahlswede1995maximal}
		R.~Ahlswede and L.~H. Khachatrian, ``Maximal sets of numbers not containing $k+1$ pairwise coprime integers,'' \emph{Acta Arithmetica}, vol.~72, no.~1, pp.~77--100, 1995.
		
		\bibitem{ahlswede1994extremal}
		R.~Ahlswede and L.~H. Khachatrian, ``On extremal sets without coprimes,'' \emph{Acta Arithmetica}, vol.~66, no.~1, pp.~89--99, 1994.
		
		\bibitem{erdos1973crossing}
		P.~Erd\H{o}s and R.~K. Guy, ``Crossing number problems,'' \emph{American Mathematical Monthly}, vol.~80, no.~1, pp.~52--58, 1973.
		
		\bibitem{garey1983crossing}
		M.~R. Garey and D.~S. Johnson, ``Crossing number is NP-complete,'' \emph{SIAM Journal on Algebraic Discrete Methods}, vol.~4, no.~3, pp.~312--316, 1983.
		
		\bibitem{pomerance1980proof}
		C.~Pomerance and J.~L. Selfridge, ``Proof of D.~J. Newman's coprime mapping conjecture,'' \emph{Mathematika}, vol.~27, no.~1, pp.~69--83, 1980.
		
		\bibitem{erdos1961remarks}
		P.~Erd\H{o}s, ``Remarks on number theory. I,'' \emph{Mat. Lapok}, vol.~12, pp.~10--17, 1961.
		
	\end{thebibliography}
\end{document}